\def\Qed{\ifhmode\unskip\nobreak\fi\quad 
  \ifmmode\square\else$\square$\fi} 
\newtheorem{definition}{Definition}[section]
\newtheorem{proposition}{Proposition}[section]
\newtheorem{theorem}{Theorem}[section]
\newtheorem{lemma}{Lemma}[section]
\newtheorem{corollary}{Corollary}[section]
\numberwithin{equation}{section}
\newcounter{remark}[section]
\newenvironment{remark}
{\refstepcounter{remark}\medskip\noindent{\sc Remark\ \thesection.\theremark:}}{\medskip}
\newcounter{alphatheo}
\renewcommand\thealphatheo{\Alph{alphatheo}}
\newenvironment{alphatheo}{\refstepcounter{alphatheo}\medskip\noindent{\bf Theorem \thealphatheo.} \it }{\medskip}
\newcounter{example}
\renewenvironment{proof}{\medskip\noindent{\sc Proof:}}{\medskip}
\newcommand{\R}{\mathbb R}
\newcommand{\N}{\mathbb N}
\newcommand{\C}{\mathbb C}
\def\M{{\mathcal M}}
\def\ccinf{C^\infty_{c}}
\def\<{\langle}
\def\>{\rangle}
\def\eps{\varepsilon}
\def\ds{\displaystyle}
\DeclareRobustCommand{\charchi}{{\mathpalette\irchi\relax}} 
\newcommand{\irchi}[2]{\raisebox{\depth}{$#1\chi$}}
\DeclareMathOperator*{\real}{Re}
\DeclareMathOperator*{\imag}{Im}
\DeclareMathOperator*{\diver}{div}
\DeclareMathOperator*{\esssup}{ess\hspace{2pt}sup}
\begin{document}

\title[Lebesgue solvability of elliptic diff. op. for measures]{A note on Lebesgue solvability of elliptic homogeneous linear equations with measure data}

\author {V. Biliatto}
\address{Departamento de Matem\'atica, Universidade Federal de S\~ao Carlos, S\~ao Carlos, SP,
13565-905, Brazil}
\email{victorbiliatto@estudante.ufscar.br}

%\author {L. Moonens}
%\address{Departamento \!de\! Computa\c{c}\~ao\! e \!Matem\'atica, Universidade de S\~ao Paulo, Ribeir\~ao Preto, SP, 14040-901, Brasil}
%\email{@}

\author {T. Picon}
\address{Departamento de Computa\c{c}\~ao e Matem\'atica, Universidade de S\~ao Paulo, Ribeir\~ao Preto, SP, 14040-901, Brazil}
\email{picon@ffclrp.usp.br}

\thanks{The first was partially supported by Coordena\c{c}\~ao de Aperfei\c{c}oamento de Pessoal de N\'ivel Superior (CAPES - 88882.441243/2019-01) and the second by Conselho Nacional de Desenvolvimento Cient\'ifico e Tecnol\'ogico (CNPq - grant 311430/2018-0) and Funda\c{c}\~ao de Amparo \`a  Pesquisa do Estado de S\~ao Paulo (FAPESP - grant 18/15484-7). }

\subjclass[2010]{47F05 35A23 35B45 35J48 28A12 26B20;  }

\keywords{divergence-measure vector fields, Lebesgue solvability, $L^{1}$ estimates, elliptic equations, canceling operators, }

\begin{abstract}
In this work, we present new results on solvability of the equation $A^{*}(D)f=\mu$ for $f \in L^{p}$ and positive measure data $\mu$ associated to an elliptic homogeneous {linear} differential operator $A(D)$ of order m. Our method is based on $(m,p)-$energy control of $\mu$ giving a natural characterization for solutions when $1\leq p < \infty$. We also obtain sufficient conditions in the limiting case $p=\infty$ using {new $L^{1}$ estimates on measures for elliptic and canceling operators.}  %As application we extend and recover several well know  on measures with peculiar assumption on canceling property where $p=\infty$.      in the class of solutions $F \in L^{p}$, where $1\leq p \leq \infty$.}
%We also present the local solvability ..... Let $A(x,D)$ be an elliptic linear differential operator of order $\nu$ with smooth complex caoefficients in  $\Omega\subset\erre^N$ from a complex vector space $E$ to a complex vector space $F$.In this paper we show that if 
\end{abstract}

\maketitle

%\tableofcontents

%%%%%%%%%%%%%%%1111111111111111
\section{Introduction} 

N. Phuc and M. Torres in \cite{PT} characterized the existence of solutions in Lebesgue spaces for the divergence equation
\begin{equation}\label{div}
\diver f =\nu,
\end{equation}
where $\nu \in \M_{+}(\R^N)$, the set of scalar positive Borel measures on $\R^N$, and $f \in L^{p}(\R^N,\R^N)$. The method is based on controlling the $(1,p)-$ energy of $\nu$ defined by $\|I_{1}\nu\|_{L^{p}}$, where $I_{1}$ is the Riesz potential operator. In fact, $\|I_{1}\nu\|_{L^{p}}$ {finite} is a necessary condition for solvability in $L^{p}$, since from   \eqref{div} we have 
\begin{equation}\label{i1}
I_{1}\nu=c_N\sum_{j=1}^{N}R_{j}f_{j}
\end{equation}
and the control {in norm} follows as a direct consequence of the continuity of Riesz transform operators $R_{j}$ in $L^{p}(\R^{N})$ for $1<p<\infty$. The following result was {proved} in \cite[Theorems 3.1 and 3.2]{PT}:\\

\noindent \textbf{Theorem.}
If $f \in  L^{p}(\R^N,\R^N)$ satisfies \eqref{div} for some $\nu \in \M_{+}(\R^N)$, then
\begin{enumerate}
\item[(i)]$\nu=0$, assuming $1 \leq p \leq N/(N-1)$;
\item[(ii)]$\nu$ has finite $(1,p)$-energy, assuming  $N/(N-1)<p<\infty$. Conversely, if 
$\nu \in \M_{+}(\R^N)$ has finite $(1,p)-$energy, then there is a vector field $f \in  L^{p}(\R^N,\R^N)$ satisfying \eqref{div}.
\end{enumerate}
The previous result does not cover the case $p=\infty$, %\sout{since the identity \eqref{i1} does not give any additional information} 
{since the proof breaks down once the Riesz transform is not bounded in $L^{\infty}(\R^{N})$}. However from Gauss-Green theorem, if $f \in L^{\infty}(\R^{N}, \R^{N})$ is a solution of \eqref{div} then for any ball $B(x,r)$ there exists $C=C(N)>0$ such that
\begin{align}\nonumber
\nu(B(x,r)) &= \int_{\partial B(x,r)} f \cdot n \; d\mathcal{H}^{N-1}
%&\leq \int_{\partial B(x,r)} \|f\|_{L^\infty} \, d\mathcal{H}^{N-1} 
\leq C \|f\|_{L^\infty}r^{N-1}.
\end{align}
It is easy to check that $\|I_{1}\nu\|_{L^{\infty}}<\infty$ is stronger than {previous one}. A non trivial argument (see \cite{PT}) is sufficient to show that control
\begin{align}\label{stronger}
\nu(B(x,r))  
\leq C r^{N-1},
\end{align}
where the constant is independent of $x \in \R^{N}$ and $r>0$ implies that 
\begin{equation}\label{gaussgreen}
\left| \int_{\R^{n}}u(x)d\nu \right| \leq C \|\nabla u \|_{L^{1}}, \quad \forall \, u \, \in C_{c}^{\infty}(\R^{n})
\end{equation}
and from standard duality argument a solution for \eqref{div} {in $f \in L^{\infty}(\R^{N}, \R^{N})$} is obtained. % \textcolor{red}{Here $W^{1,1}(\R^n)$ denotes the closure in $L^{1}$ norm of $\varphi \in C_{c}^{\infty}(\R^{n})$ with $\|\nabla \varphi\|_{L^{1}}$.} % functions  $L^{1}(\R^{n})$ whose derivatives to order one belongs }
{Measures satisfying  the Morrey control for $1 \leq \lambda <\infty$ given by 
$$\|\mu\|_{\lambda}:=\sup_{B}\frac{ |\mu| (B(x,r))}{r^{\lambda}}<\infty, $$
where the supremum is taken for all open balls $B=B(x,r)$ with $x \in \R^{N}$ and $r>0$, and $|\mu|$ is the total variation on $\mu$ are referred as $\lambda-\textit{Ahlfors regular}$.}

%The previous inequality is the full version of the boundedness in $L^{1}$ norm to Hardy operator $Tg(t)=\frac{1}{t}\int_{0}^{t}g(s)ds$  

Let $A(D)$ be a homogeneous linear differential operator on $\R^{N}$, $N\ge2$, with constant coefficients  of order $m$ from a {finite dimensional} {complex} vector space E to {a finite dimensional} {complex} vector space F given by
$$A(D)=\sum_{|\alpha|=m} a_{\alpha}\partial^{\alpha}: C_{c}^{\infty}(\R^{N},E) \rightarrow  C_{c}^{\infty}(\R^{N},F) , \quad a_{\alpha} \in \mathcal{L}(E,F).$$

Inspired by the previous {theorem}, in this paper we carry further the study of Lebesgue solvability for the equation 
\begin{equation}\label{main1}
A^{\ast}(D)f=\mu,
\end{equation}
where $A^{*}(D)$ is the (formal) adjoint operator associated to the homogeneous {linear} differential operator $A(D)$. 
 %\textcolor{red}{The \eqref{localcondition} is a Wollf type condition ...}
Naturally, the concept of energy of the measure $\mu$ associated to \eqref{main1} can be extended in accordance to the order of $A(D)$ {called} $(m,p)$-energy of $\mu$ defined by the functional $\|I_m \mu\|_{L^p}$ (see the Definition \ref{def2} {for complete details}).

Our first result {concerns the Lebesgue solvability for the equation \eqref{main1} when $1 \leq p <\infty$.}

%an extension of previous theorem  \cite[Theorems 3.1 and 3.2]{PT} for higher order homogeneous linear differential operators:} %\sout{ announced at \cite{PT}:}

%Our first result is a slight improvement of the theorems from \cite{PT} stated above:

\begin{alphatheo}\label{theoA}
Let $A(D)$ be a homogeneous linear differential operator of order $1 \leq m<N$ on $\R^{N}$, $N\ge2$, from $E$ to $F$ and  $\mu \in \M_{+}(\R^N,E^{*})$. 
\begin{itemize}
\item[(i)] If  $1 \leq p \leq N/(N-m)$ and $f \in L^p(\R^N,F^{*})$ is a solution for \eqref{main1} then $\mu \equiv 0$.
\item[(ii)] If $N/(N-m) < p < \infty$ and  $f \in L^p(\R^N,F^{*})$ is a solution for \eqref{main1} then $\mu$ has finite $(m,p)-$energy. Conversely, if $|\mu|$ has finite $(m,p)-$energy and $A(D)$ is elliptic, then there exists a function $f \in L^p(\R^N,F^{*})$ solving \eqref{main1}.
\end{itemize}
\end{alphatheo}

We recall that ellipticity means the symbol $A(\xi): E \rightarrow F$ given by
\begin{equation}
A(\xi):=\sum_{|\alpha|=m} a_{\alpha}\xi^{\alpha}
\end{equation}
is injective for $\xi \in \R^{N}\backslash  \left\{ 0 \right\}$.  In particular, the {Theorem A recovers} {\cite[Theorems 3.1 and 3.2]{PT}} taking $A(D)=\nabla$, where $E=\R$ and $F=\R^N$,  % : C^{\infty}(\R^N,\R) \rightarrow C^{\infty}(\R^N,\R^{N})$ 
which is elliptic and $A^{*}(D)=\diver$.%, the divergence operator.

Our second and main result deals with the case $p=\infty$.

\begin{alphatheo}\label{theoB}
Let $A(D)$ be a homogeneous linear differential operator of order $1\leq m<N$ on $\R^{N}$ from $E$ to $F$ and  $\mu \in \M_{+}(\R^N,E^{*})$. % If $F \in L^p(\R^N,E)$ is a solution for $A^{\ast}(D)F = \mu$, then $\mu$ has finite $(m,p)$-energy. Conversely, 
If $A(D)$ is elliptic and cancelling, and $\mu$ satisfies %both
\begin{equation}\label{stronger m}
%\textcolor{red}{|\mu|(B_r)} \leq C_1 r^{N-m},
{\|\mu\|_{0,N-m}:=\sup_{r>0} \frac{|\mu|(B_r)}{ r^{N-m}}}<\infty,
\end{equation}
and the potential control %following truncated Wolff's potential 
%\begin{enumerate}
%\item[(i)] there exists a constant $C>0$ such that $\mu(B(x,r)) \leq C r^{N-m}$ for every $x \in \R^N$ and $r>0$;
%\item[(ii)] $\ds\int_0^{|y|/2} \dfrac{\mu(B(y,r))}{r^{N-m+1}} \, dr < \infty$,
%\end{enumerate}
\begin{equation}\label{localcondition}
\ds\int_0^{{|y|/2}} \dfrac{{|\mu|(B(y,r))}}{r^{N-m+1}} \, dr \lesssim 1, \quad  \text{ uniformly on }y ,
\end{equation}
then there exists $f \in L^\infty(\R^N,F^{*})$ solving \eqref{main1}. 
\end{alphatheo}

 {We point out that the assumption \eqref{stronger m} is weaker in comparison to $\|\mu\|_{N-m}<\infty$, since it is only necessary to take the supremum over balls centered at the origin. The condition \eqref{localcondition} can be understood as an uniform control of the truncated Wolff's potential associated to positive Borel measures on $\R^{N}$ originally defined 
 $$W^{t}_{\alpha,p}\nu(x)=\int_{0}^{t}\left[ \frac{\nu(B(x,r))}{r^{N-\alpha p}} \right]^{\frac{1}{p-1}}\frac{dr}{r}$$
 for $1<p<\infty$ and $\alpha>0$ (see \cite{HW} for original introduction of Wolff's potential and \cite{AdHe, PV} for applications).}
 
 The canceling property means 
 \begin{equation}\label{canceling}
 \displaystyle{\bigcap_{\xi \in \R^{N}\backslash 0}\,A(\xi)[E]=\left\{ 0 \right\}}.
 \end{equation}
{The theory of canceling operators is due to J. Van Schaftingen (see  \cite{VS}), motivated by studies of some $L^{1}$  \textit{a priori}  estimates for vector fields with divergence free and chain of complexes.}% In particular he characterized  of the classical Sobolev-Gagliardo-Nirenberg inequality
%$$ \| D^{m-1} \; u \|_{L^{N/(N-1)}} \leq C \| A(D) u \|_{L^1}, \quad \quad \forall \, u \in C_{c}^{\infty}(\R^{N};E)$$ 
%for elliptic operators. In particular, this inequality unifies several well known \textit{a priori} estimates for vector fields.}

The main ingredient in the proof  Theorem \ref{theoB} is to investigate sufficient conditions on $\mu$ in order to obtain
\begin{equation}\label{ineqmedida}	
\left|\int_{\R^N} u(x) \, d\mu(x) \right| \lesssim 
		 \|A(D)u\|_{L^{1}}, \qquad \forall \, u \in C_{c}^{\infty}(\R^{N},E).
\end{equation}
Inequalities of this type were studied by P. de N\'apoli and T. Picon in \cite{DNP} in the setting of vector fields associated to cocanceling (see the Definition \eqref{de1.1}) operators where $d\mu=|x|^{-\beta}dx$ i.e. the {(scalar) positive} measure is given by {special weighted power for some $\beta>0$}. {More recently, J. Van Schaftingen,  F. Gmeineder and B. Rai\c{t}\u{a} (see \cite[Theorem 1.1]{GRS}) characterized a similar inequality involving positive Borel scalar measures, precisely: if $q=\frac{N-s}{N-1}$ and $0 \leq s <1$ then the estimate }
\begin{equation}\label{raita}
\left(\int_{\R^N} \left|D^{m-1}u(x)\right|^{q} \, d\nu(x) \right)^{1/q}  \lesssim \|\nu \|_{{q(N-1)}}^{1/q} 
		 \|A(D)u\|_{L^{1}}, \quad 
\end{equation}
for all $u \in C_{c}^{\infty}(\R^{N},E)$ {and all $q(N-1)-$Ahlfors regular measure $\nu$,}
%\begin{equation}
%\|\mu \|_{L^{1,\lambda}}=\sup_{B(x,R)} \frac{|\mu|(B(x,R))}{R^{\lambda}}
%\end{equation} 
holds if and only if $A(D)$ is elliptic and canceling. Besides the authors claim that it seems to be no simple a generalization for $s=1$ i.e $q=1$, in particular the inequality holds for the total derivative operator $A(D)=D^{m}$ that is elliptic and canceling {(see Remark \ref{remarkdm})}. %In the Subsection \ref{trace}, we present an extension for \eqref{raita} when $s=1$ for elliptic and canceling  operators assuming an additional potential condition on $\mu$. %       The authors mentioned also discussed parcial results for the case $s=1$. % in the setting of multiplicative inequalities (see).  
{We also point out that in a different fashion from the previous result we obtain sufficient conditions on $\mu$ to fulfil \eqref{ineqmedida}  that come naturally from $(m,p)-$energy control.}

{The paper is organized as follows. In Section \ref{sec2} we briefly study properties of measures with finite $(m,p)$-energy. The proof of Theorem \ref{theoA} is presented in Section \ref{sec3}. The Section \ref{sec4} is devoted to the proof of Theorem \ref{theoB}, where a Fundamental Lemma \ref{main}, with own interest, is presented. Finally in Section \ref{sec5} we present some general comments, in particular we discuss an extension for inequality \eqref{raita} when $s=1$ for elliptic and canceling operators and 
 a reciprocal to Theorem \ref{theoB} for first order operators.}\\

\noindent \textbf{Notation:} throughout this work, the symbol $f \lesssim g$ means that there exists a constant $C>0$, neither depending on $f$ nor $g$, such that $f \leq C \, g$. %By a dyadic cube we mean cubes on $\R^n$, open on the right whose vertices are adjacent points of the lattice $(2^{-k}\Z)^n$ for some $k\in \Z$.
Given a set $A\subset \R^N$ we denote by $|A|$ its Lebesgue measure. We write $B=B(x,R)$ for the open ball with center $x$ and radius $R>0$. By $B_{R}$ we mean the ball centered at the origin with radius $R$. We fix $\fint_{Q}f(x)dx := \frac{1}{|Q|}\int_{Q}f(x)dx$ and  denote $\mathcal{M}_{+}(\Omega, \mathbb{C})$ the set of complex-valued positive Borel measures on $\Omega \subseteq \R^N$ given by $\mu=\mu^{\real} + i\,\mu^{\imag}$, where $ \mu^{\real}, \mu^{\imag} \in \mathcal{M}_{+}(\Omega):= \mathcal{M}_{+}(\Omega, \R)$.

%%%%%%%%%%%%%%%%%%

\section{Measures with finite energy}\label{sec2}

For any $0 < m < N$ and $f \in S(\R^N)$, consider the fractional integrals also called Riesz potential operators defined by
\[I_m f(x) = \dfrac{1}{\gamma(m)} \int_{\R^N} \dfrac{f(y)}{|x-y|^{N-m}} dy,\]
with $\gamma(m) := \pi^{N/2} 2^m \Gamma(m/2)/\Gamma\left({(N-m)}/{2}\right)$. 

{Let $\Omega \subseteq \R^N$ be an open set and $X$ be a complex vector space with {$\dim_{\C} X=d < \infty$}. We denote by $\M_{+}(\Omega,X)$ the set of all $X$-valued complex vector space measures on $\Omega$, $\mu = (\mu_{1},\dots, \mu_{d})$ where        $\mu_{\ell}=\mu_{\ell}^{\real} + i\,\mu_{\ell}^{\imag} \in \mathcal{M}_{+}(\Omega,\C)$ for $\ell=1,\dots,d$. }
{If
 $\eta \in \mathcal{M}_{+}(\Omega,\C)$ {then} we define
\[I_m \eta(x) = \dfrac{1}{\gamma(m)} \int_{\Omega} \dfrac{1}{|x-y|^{N-m}} d\eta(y)\]
and $I_{m}\mu:=(I_{m}\mu_{1},\dots,I_{m}\mu_{d})$ for $\mu \in \M_{+}(\Omega,X)$.
Clearly $|I_{m}\mu(x)|\leq I_{m}|\mu|(x)$, where %$|\eta|$ is the total variation of $\eta$ and 
$|\mu|:=\sum_{j=1}^{d}|\mu_{\ell}|$ {and $|\mu_{\ell}|$ is the total variation of $\mu_{\ell}$ for each $\ell \in \left\{ 1, \dots, d \right\}$.} % we have $|I_{m}\mu(x)|\leq I_{m}|\mu|(x)$.}

%Let $X$ be a finite dimensional complex vector space and $\textcolor{red}{dim_{\C}\,X=d}$. For $\mu \in \M_{+}(\Omega,X)$ we denote each component of $\mu$ by $\mu_{\ell}$  

\begin{definition}\label{def2}
Let $1 \leq p < \infty$ and $0<m<N$. % $\Omega \subseteq \R^N$ an open set and $X$ be a finite dimensional complex vector space. 
We say that $\mu \in \M_{+}(\Omega,X)$ has finite $(m,p)-$energy if
\[ \|I_m \mu\|_{L^{p}}:= \left( \int_{\R^N} |I_m \mu(x)|^p \, dx \right)^{1/p} < \infty,\]
%where \textcolor{red}{for each $\eta \in \mathcal{M}_{+}(\Omega,\C)$} we define
%\[I_m \eta(x) = \dfrac{1}{\gamma(m)} \int_{\Omega} \dfrac{1}{|x-y|^{N-m}} d\eta(y).\]
{and $\mu$ has finite $(m,1)-$weak energy if
\[\|I_m \mu\|_{L^{1,\infty}} \doteq \sup_{\lambda > 0} \lambda \, \left| \{x: |I_m \mu(x)|>\lambda\} \right| <\infty.\]}
\end{definition}

{From previous definition follows $\|I_{m}\mu_{\ell}^{\real}\|_{L^{p}}+\|I_{m}\mu_{\ell}^{\imag}\|_{L^{p}} \lesssim \|I_{m}\mu\|_{L^{p}}$ for $\ell=1,\dots,d$. The same estimate holds replacing $L^{p}$ by $L^{1,\infty}$.}

\begin{proposition}\label{finite Im energy}
If $\mu \in \M_{+}(\Omega,X)$ has finite $(m,p)-$energy for some $1 < p \leq N/(N-m)$ or $(m,1)-$weak energy then $\mu \equiv 0$ on $\Omega$. % For $p=1$, if
	%then, $\mu \equiv 0$ on $\Omega$.
\end{proposition}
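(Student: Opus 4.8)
The plan is to show that if $\mu \in \M_{+}(\Omega, X)$ has finite $(m,p)$-energy for some $1 < p \le N/(N-m)$, or finite $(m,1)$-weak energy, then $I_m|\mu|$ is finite a.e.\ yet the Riesz potential of a nonzero positive measure simply cannot lie in these small spaces; hence $|\mu| \equiv 0$, which forces $\mu \equiv 0$. First I would reduce to the scalar case: by the inequality $|I_m\mu(x)| \le I_m|\mu|(x)$ recorded before the Definition, and the remark that $\|I_m\mu_\ell^{\real}\|_{L^p} + \|I_m\mu_\ell^{\imag}\|_{L^p} \lesssim \|I_m\mu\|_{L^p}$ (and the analogous $L^{1,\infty}$ bound), it suffices to prove the statement for a single scalar measure $\eta \in \M_{+}(\Omega)$ with $\|I_m\eta\|_{L^p} < \infty$ or $\|I_m\eta\|_{L^{1,\infty}} < \infty$. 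Extending $\eta$ by zero we may assume $\eta \in \M_+(\R^N)$.

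Next, suppose for contradiction that $\eta \not\equiv 0$, so there is a ball $B(x_0, R_0)$ with $\eta(B(x_0,R_0)) = c > 0$. For every $x$ with $|x - x_0| \ge R_0$ one has $|x - y| \le |x - x_0| + R_0 \le 2|x-x_0|$ for all $y \in B(x_0, R_0)$, hence
\[
I_m\eta(x) \ge \frac{1}{\gamma(m)} \int_{B(x_0,R_0)} \frac{d\eta(y)}{|x-y|^{N-m}} \ge \frac{c}{\gamma(m)\,(2|x-x_0|)^{N-m}}.
\]
Therefore $I_m\eta(x) \gtrsim |x - x_0|^{-(N-m)}$ on the exterior region $\{|x-x_0| \ge R_0\}$. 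The function $x \mapsto |x-x_0|^{-(N-m)}$ belongs to $L^q$ of that exterior region precisely when $q(N-m) > N$, i.e.\ $q > N/(N-m)$; it belongs to $L^{q,\infty}$ of the exterior region precisely when $q(N-m) \ge N$, i.e.\ $q \ge N/(N-m)$; and it never belongs to $L^{1,\infty}$ near infinity when $N - m \le N$ unless $N-m \ge N$ — more carefully, $|x-x_0|^{-(N-m)} \in L^{1,\infty}(\{|x-x_0|\ge R_0\})$ would require the distribution function $\lambda \mapsto \lambda\,|\{|x-x_0|^{-(N-m)} > \lambda\}| = \lambda \cdot c_N (\lambda^{-1/(N-m)})^N$ to be bounded as $\lambda \to 0$, i.e.\ $1 - N/(N-m) \ge 0$, which is false since $m \ge 0$. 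Thus a nonzero $\eta$ cannot have $I_m\eta \in L^p$ for $p \le N/(N-m)$, and likewise cannot have $I_m\eta \in L^{1,\infty}$, contradicting the hypothesis.

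To finish, combining the two reductions: finite $(m,p)$-energy of $\mu$ forces each scalar piece $\mu_\ell^{\real}, \mu_\ell^{\imag}$ to have its $I_m$ in $L^p$ with $p \le N/(N-m)$, hence each is the zero measure by the previous paragraph, so $\mu \equiv 0$; the weak-energy case is identical using $L^{1,\infty}$ in place of $L^p$. I expect the only delicate point to be bookkeeping the precise integrability/weak-integrability thresholds of $|x-x_0|^{-(N-m)}$ at infinity — this is where the borderline exponent $p = N/(N-m)$ and the weak-$L^1$ endpoint enter, and one must check that equality of exponents already fails (the power is integrable over the exterior only for strictly larger $q$, but lies in weak-$L^q$ at the endpoint, which is still incompatible with $L^p$, not $L^{p,\infty}$, for the stated range). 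Everything else is routine.
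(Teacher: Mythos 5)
Your proof is correct and follows essentially the same strategy as the paper's: reduce to a scalar positive measure, bound $I_m\eta$ from below by $c\,|x-x_0|^{-(N-m)}$ away from a ball of positive measure, and note that this decay is incompatible with membership in $L^p$ for $p \leq N/(N-m)$ (the borderline exponent still fails since strict inequality $q(N-m)>N$ is needed) and with $L^{1,\infty}$ since $1 - N/(N-m) < 0$. The only differences are cosmetic (balls around a point of positive mass versus balls $B_R$ centered at the origin exhausting $\Omega$), and your bookkeeping at the endpoints is accurate.
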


\begin{proof} {Let $R>0$ and by simplicity we assume $\mu_{\ell} \in \mathcal{M}_{+}(\Omega)$ for each $\ell \in \left\{ 1, \dots, d \right\}$. We have} % positive for any $\ell \in \{1,\dots,d\}$}. We have
	\begin{align*}
		I_m \mu_{\ell}(x) \gtrsim \int_{B_R \cap \Omega} \dfrac{1}{|x-y|^{N-m}} \, d\mu_{\ell}(y)
		%& \gtrsim \int_{B_R \cap \Omega} \dfrac{1}{(|x|+R)^{N-m}} \, d\mu_{\ell}(y)\\ 
		& \geq \dfrac{\mu_{\ell}(B_R \cap \Omega)}{(|x|+R)^{N-m}}.
	\end{align*}
	Thus,
	\begin{align*}
		\int_{\R^N} |I_m \mu(x)|^p \, dx &\gtrsim \int_{\R^N} [I_m \mu_{\ell}(x)]^p \, dx \geq \int_{\R^N} \left[\dfrac{\mu_{\ell}(B_R \cap \Omega)}{(|x|+R)^{N-m}}\right]^p \, dx\\
		&= \left[\mu_{\ell}(B_R \cap \Omega)\right]^p \int_{\R^N} \dfrac{1}{(|x|+R)^{(N-m)p}} \, dx.
	\end{align*}
	Observe that for $1 < p \leq N/(N-m)$ {the last integral} 
	%\begin{align*}
	%	\int_{\R^N} \dfrac{1}{(|x|+R)^{(N-m)p}} \, dx &%= c(N) \int_{0}^\infty \dfrac{r^{N-1}}{(r+R)^{(N-m)p}} \, dr
		%=  c(N)\int_{R}^\infty \dfrac{(r-R)^{N-1}}{r^{(N-m)p}} \, dr
	%\end{align*}
blows-up to infinity, %, since $N-1 - (N-m)p \geq -1$. 
thus we must have $\mu_{\ell}(B_R \cap \Omega)=0$, since $ \|I_m \mu \|_{L^p}<\infty$.
To the case $p=1$ we have
 %Similarly, $\mu_{\ell}^{\imag}(B_R \cap \Omega)=0$. 
%If $\|I_m \mu\|_{L^{1,\infty}}<\infty$, then, for every $\ell \in \{1,\dots,d\}$,
\[\sup_{\lambda > 0} \lambda \left|\left\{x \in \R^{N}\,:\, \frac{\mu_{\ell}(B_R \cap \Omega)}{(|x|+R)^{N-m}} > \lambda\right\}\right| \lesssim  
\|I_m \mu\|_{L^{1,\infty}}<\infty.\]
%However,
%\begin{align*}
%	\frac{\mu_{\ell}(B_R \cap \Omega))}{(|x|+R)^{N-m}}>\lambda %&\iff |x| < \left(\frac{\mu_{\ell}(B_R \cap \Omega)}{\lambda}\right)^{\frac{1}{N-m}} - R\\ 
%	&\iff x \in B\left(0,\left( \frac{\mu_{\ell}(B_R \cap \Omega)}{\lambda} \right)^{\frac{1}{N-m}}-R\right).
%\end{align*}
Thus,
\begin{align*}
	\lambda\,\left|\left\{x: \frac{\mu_{\ell}(B_R \cap \Omega)}{(|x|+R)^{N-m}}>\lambda\right\}\right|
	&= \lambda\, \left|B\left(0,\left(\frac{\mu_{\ell}(B_R \cap \Omega)}{\lambda}\right)^{\frac{1}{N-m}}-R\right)\right|\\
	%&= \lambda\, \left|\left(\dfrac{c}{\lambda}\right)^{\frac{1}{N-m}}B\left(0,\mu_{\ell}(B_R \cap \Omega)^{\frac{1}{N-m}}-\left({\lambda}\right)^{\frac{1}{N-m}}R\right)\right|\\
	%&= \lambda^{1-\frac{N}{N-m}}\, \left|B\left(0,\mu_{\ell}(B_R \cap \Omega)^{\frac{1}{N-m}}-\lambda^{\frac{1}{N-m}}R\right)\right|\\
	&= {\lambda}^{-\frac{m}{N-m}}\, \left|B\left(0,\mu_{\ell}(B_R \cap \Omega)^{\frac{1}{N-m}}-\lambda^{\frac{1}{N-m}}R\right)\right|,
\end{align*}
which blows-up to infinity when $\lambda>0$ is small and $\mu_{\ell}(B_R \cap \Omega) \neq 0$. %The conclusion follows as before. 
Given that $R>0$ was arbitrarily chosen, and that $\displaystyle{\Omega = \bigcup_{k \in \N} [B_k \cap \Omega]}$, we conclude that $\mu_\ell \equiv 0$ on $\Omega$ for every $\ell \in \{1,\dots,d\}$. Therefore, $\mu \equiv 0$.
\Qed
\end{proof}

\section{Proof of Theorem \ref{theoA}} \label{sec3}

Throughout this section, $A(D)$ denotes an elliptic homogeneous linear differential operator of order $m$ on {$\R^N$}, $N \geq 2$ and $1 \leq m < N$, with constant coefficients from a finite dimensional complex vector space $E$ to a finite dimensional complex vector space $F$. {Since the vector spaces have finite dimension we will use the identification $X$ instead $X^{*}$, for simplicity.}

%\textcolor{red}{The first part of Theorem A is the statement of next result.}

\begin{proposition}\label{p pequeno coef const}
	Let $1 \leq p \leq N/(N-m)$. If $\mu \in \M_{+}(\R^N,E)$ and $f \in L^p(\R^N,F)$ is a solution for $A^*(D)f = \mu$, then $\mu \equiv 0$.
\end{proposition}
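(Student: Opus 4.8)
The plan is to derive, from the equation $A^{*}(D)f=\mu$, a pointwise-in-norm control of the Riesz potential $I_{m}\mu$ by $f$, and then invoke Proposition \ref{finite Im energy}. First I would take the Fourier transform of both sides of $A^{*}(D)f=\mu$. Recall that the symbol of $A^{*}(D)$ is $A(\xi)^{*}$, so $\widehat{\mu}(\xi)=A(\xi)^{*}\widehat{f}(\xi)$ (up to the usual constant factors of $2\pi i$). Since $A(D)$ is elliptic, $A(\xi)$ is injective for $\xi\neq 0$, hence $A(\xi)^{*}$ is surjective and $A(\xi)^{*}A(\xi)\colon E\to E$ is an invertible linear map for each $\xi\neq 0$, homogeneous of degree $2m$ in $\xi$. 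Therefore one can write a left inverse: $\widehat{f}(\xi)=A(\xi)(A(\xi)^{*}A(\xi))^{-1}\widehat{\mu}(\xi)$ is not quite what I want; rather I want to recover $\mu$ from $f$, which is trivial, so instead I should produce $I_{m}\mu$ from $f$. Multiply $\widehat{\mu}(\xi)=A(\xi)^{*}\widehat{f}(\xi)$ by $|\xi|^{-m}$: the multiplier $|\xi|^{-m}A(\xi)^{*}$ is homogeneous of degree $0$ and smooth on $\R^{N}\setminus\{0\}$, hence (componentwise) a bounded Fourier multiplier on $L^{p}(\R^{N})$ for $1<p<\infty$ by Mikhlin--Hörmander. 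Thus $I_{m}\mu$ equals, componentwise, a Calderón--Zygmund operator applied to $f$, giving $\|I_{m}\mu\|_{L^{p}}\lesssim\|f\|_{L^{p}}<\infty$ for $1<p<\infty$; this is the exact analogue of \eqref{i1} for the divergence case.

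Having established that $\mu$ has finite $(m,p)$-energy for every $p$ in the range $1<p\le N/(N-m)$, Proposition \ref{finite Im energy} immediately yields $\mu\equiv 0$ for such $p$. The only remaining case is the endpoint $p=1$. Here the Mikhlin multiplier theorem gives instead the weak-type bound: the operator with multiplier $|\xi|^{-m}A(\xi)^{*}$ maps $L^{1}$ to $L^{1,\infty}$ (it is a Calderón--Zygmund operator), so $\|I_{m}\mu\|_{L^{1,\infty}}\lesssim\|f\|_{L^{1}}<\infty$, i.e.\ $\mu$ has finite $(m,1)$-weak energy. Again Proposition \ref{finite Im energy}, which explicitly covers the weak-energy case, forces $\mu\equiv 0$. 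This disposes of the full range $1\le p\le N/(N-m)$.

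A couple of technical points must be handled with care, and these constitute the main obstacle. First, $\mu$ is a measure, not an $L^{1}$ function, so the identity $\widehat{\mu}=A(\xi)^{*}\widehat{f}$ and the subsequent manipulations should be justified in the sense of tempered distributions: $\mu$ is a finite-energy positive measure, hence tempered, $\widehat{\mu}$ makes sense, and $A^{*}(D)f=\mu$ holds distributionally by hypothesis; multiplying a tempered distribution by the smooth degree-$0$ symbol $|\xi|^{-m}A(\xi)^{*}$ is legitimate, and one checks that the resulting distribution is precisely $I_{m}\mu$ by testing against Schwartz functions and using the known Fourier symbol $|\xi|^{-m}$ (times constants) of $I_{m}$. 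Second, one should confirm that $I_{m}\mu$ is a well-defined (a.e.\ finite) function before speaking of its $L^{p}$ norm; but this is automatic once the multiplier identity places it in $L^{p}$ (resp.\ $L^{1,\infty}$), and in any case $I_{m}|\mu|$ is a lower bound as noted after Definition \ref{def2}, so the argument in Proposition \ref{finite Im energy} applies verbatim to each scalar component $\mu_{\ell}$. With these justifications in place the proof is complete; I would write it out as: reduce to $\mu_{\ell}\in\mathcal{M}_{+}(\R^{N})$, apply the Fourier/multiplier argument to get $I_{m}\mu\in L^{p}$ or $L^{1,\infty}$, then quote Proposition \ref{finite Im energy}.
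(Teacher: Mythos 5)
Your proposal is correct and follows essentially the same route as the paper: both arguments reduce the statement to the identity \eqref{riesz}, namely $I_m\mu = c\sum_{|\alpha|=m} a_\alpha^{*} R^{\alpha} f$, and then combine the strong $(p,p)$ bounds ($1<p<\infty$) and the weak $(1,1)$ bound for the higher-order Riesz transforms with Proposition \ref{finite Im energy}. The only real difference is how that identity is obtained --- you work directly on the Fourier side with the degree-zero multiplier $|\xi|^{-m}A(\xi)^{*}$ (Mikhlin--H\"ormander / Calder\'on--Zygmund), while the paper derives it via the layer-cake formula for $I_m\mu$, Gauss--Green on balls and the computation of $\widehat{K}_{j_\alpha}$; the distributional justifications you flag are of the same nature as those implicit in the paper's computation.
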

\begin{proof}
	From the identity
	$\displaystyle{(N-m)\int_{|x-y|}^{\infty}\frac{1}{r^{N-m+1}} \, dr = \frac{1}{|x-y|^{N-m}}}$
	and the Fubini's theorem we may write 
\begin{align*}
		I_m\mu(x) %= \int_{\R^N}\frac{1}{|x-y|^{N-m}} \, d\mu(y)
		=c_{N,m}\int_{\R^N}\left(\int_{|x-y|}^{\infty}\frac{1}{r^{N-m+1}} \, dr \right)\, d\mu(y) 
	%\end{align*}
%	If ${A_y = \{r \in \R : r>|x-y|\}}$, then by Fubini's Theorem,
%	\begin{align*}
		%&=\int_{\R^N}\left(\int_{|x-y|}^{\infty}\frac{1}{r^{N-m+1}} \, dr \right)\, d\mu(y) \\%&=\int_{\R^N}\left(\int_{0}^{\infty} \frac{\charchi_{A_y}(r)}{r^{N-m+1}} \, dr \right)\, d\mu(y)\\
		&= c_{N,m} \int_{0}^{\infty}\left(\int_{\R^N} \frac{\charchi_{ \left\{r>|x-y|\right\}}  (r)}{r^{N-m+1}} \, d\mu(y) \right)\, dr \\
		%&= \int_{0}^{\infty}\left(\int_{B(x,r)} \frac{1}{r^{N-m+1}} \, d\mu(y) \right)\, dr\\
		%&= \int_{0}^{\infty} \frac{\mu(B(x,r))}{r^{N-m+1}} \, dr\\
		&=c_{N,m} \lim\limits_{\eps \to 0^+}\int_{\eps}^{\infty} \frac{\mu(B(x,r))}{r^{N-m+1}} \, dr.
		%&= \lim\limits_{\eps \to 0^+}\int_{\eps}^{\infty} \frac{1}{r^{N-m+1}} \left(\int_{B(x,r)} A^*(D)f(y) \,dy \right)\, dr.
	\end{align*}
Now, using the Gauss-Green theorem, we have
\begin{align*}	
	\mu(B(x,r)) &=\int_{B(x,r)} A^*(D)f(y) \,dy = \sum_{|\alpha|=m} a^{*}_\alpha \int_{B(x,r)} \partial^\alpha f(y) \, dy\\
	&=  \sum_{|\alpha|=m} a^{*}_\alpha \int_{\partial B(x,r)} \partial^{\alpha-e_{j_\alpha}} f(y) \, \frac{y_{j_\alpha}-x_{j_\alpha}}{|y-x|} \, d\omega(y),
\end{align*}	
where we choose, for each multi-index $\alpha=(\alpha_1,\dots,\alpha_N)$, a number $j_\alpha \in \{1,\dots,N\}$ such that $\alpha_{j_\alpha} \neq 0$ in a way that $\partial^\alpha f = \partial_{x_{j_\alpha}}(\partial^{\alpha-e_{j_\alpha}} f)$. %We then use Gauss-Green Theorem to get
	%\[\int_{B(x,r)} A^*(D)f(y) \,dy = \sum_{|\alpha|=m} a^{*}_\alpha \int_{\partial B(x,r)} \partial^{\alpha-e_{j_\alpha}} f(y) \, \frac{y_{j_\alpha}-x_{j_\alpha}}{|y-x|} \, dS(y).\]
Summarizing 
	\begin{align*}
		I_m \mu(x) 
		%&= (N-m)\lim\limits_{\eps \to 0^+} \int_{\eps}^{\infty} \frac{1}{r^{N-m+1}} \left(\sum_{|\alpha|=m} a^{*}_\alpha \int_{\partial B(x,r)} \partial^{\alpha-e_{j_\alpha}} F(y) \, \frac{y_{j_\alpha}-x_{j_\alpha}}{|y-x|} \, dS(y)\right)\, dr\\
		&= c_{N,m}\sum_{|\alpha|=m} a^{*}_\alpha \lim\limits_{\eps \to 0^+} \int_{\eps}^{\infty} \left( \int_{|x-y|=r} \partial^{\alpha-e_{j_\alpha}} f(y) \, \frac{y_{j_\alpha}-x_{j_\alpha}}{|y-x|^{N-m+2}} \, d\omega(y)\right)\, dr\\
		&= c_{N,m} \sum_{|\alpha|=m} a^{*}_\alpha \lim\limits_{\eps \to 0^+} \int_{|x-y|>\eps} \partial^{\alpha-e_{j_\alpha}} f(y) \, \frac{x_{j_\alpha}-y_{j_\alpha}}{|x-y|^{N-m+2}} \, dy\\
		&= c_{N,m} \sum_{|\alpha|=m} a^{*}_\alpha \left( K_{j_\alpha} \ast \partial^{\alpha-e_{j_\alpha}} f\right)(x),
\end{align*}
where $K_{j_\alpha}(x) := x_{j_\alpha}/|x|^{N-m+2}$.
Thus from \cite[p.~73]{S1} we have $\widehat{K}_{j_{\alpha}}(\xi) = c_{N,m} \xi_{j_\alpha}/|\xi|^m$ and hence, recalling the constant $c_{N,m}$, we have
	\begin{align*}
		(K_{j_{\alpha}}*\partial^{\alpha-e_{j_\alpha}}&f)\,\widehat{}\,(\xi) %= \widehat{K}(\xi) \widehat{\partial^{\alpha-e_{j_\alpha}}F}(\xi) 
		= c_{N,m} \frac{\xi_{j_\alpha}}{|\xi|^m} \xi^{\alpha-e_{j_\alpha}} \widehat{f}(\xi)
		= c_{N,m}\frac{\xi^{\alpha}}{|\xi|^{m}} \widehat{f}(\xi) 
		\doteq \widehat{(R^\alpha f)}(\xi)
	\end{align*}
where $R^\alpha := R_1^{\alpha_1} \circ R_2^{\alpha_2} \circ \cdots \circ R_N^{\alpha_N}$ is the $\alpha$ order Riesz transform operator. In this way,
\begin{equation}\label{riesz}
I_m \mu = c_{m,N} \sum_{|\alpha|=m} a^{*}_\alpha R^\alpha f.    	
\end{equation}
In particular for $m=1$,
	\begin{align*}
		I_1 \mu(x) &= c_{N}\sum_{j=1}^N a_j^{*} \lim\limits_{\eps \to 0^+} \int_{|x-y|>\eps} f(y) \, \frac{x_{j}-y_{j}}{|x-y|^{N+1}} \, dy
		= c_{N}\sum_{j=1}^N a_j^{*} R_j f(x)
	\end{align*}
	for almost every $x \in \R^N$, where $R_j$ is the $j^{\text{th}}$ Riesz transform operator.% However, for $m \geq 2$, we use the following result found in \cite[p.~73]{S1}:
	%\begin{lemma}\label{transformada}
	%	Let $\gamma \in \C$ such that $0 < \real(\gamma) < N$ and $P(x)$ be a homogeneous harmonic polynomial of degree $k$ in $\R^N$, i.e. $P(x) = \sum_{|\beta|=k} c_\beta x^\beta$ and $\Delta P = 0$. If $K(x) = P(x)/|x|^{N+k-\gamma}$ then $\widehat{K}(\xi) = c_{k,\gamma} P(\xi)/|\xi|^{k+\gamma}$. \textcolor{red}{(organizar a escrita)}
	%\end{lemma}
	%The kernel $K(x) = x_{j_\alpha}/|x|^{N-m+2}$ satisfies the previous lemma with $P(x)=x_{j_\alpha}$, $k=1$ and $\gamma=m-1$. Thus $\widehat{K}(\xi) = c_{m} \xi_{j_\alpha}/|\xi|^m$. Hence,
	%\begin{align*}
	%	(K*\partial^{\alpha-e_{j_\alpha}}&F)\,\widehat{}\,(\xi) = \widehat{K}(\xi) \widehat{\partial^{\alpha-e_{j_\alpha}}F}(\xi) = c_{m} \frac{\xi_{j_\alpha}}{|\xi|^m} \xi^{\alpha-e_{j_\alpha}} \widehat{F}(\xi)\\
	%	&= c_{m}\frac{\xi^{\alpha}}{|\xi|^{m}} \widehat{F}(\xi) \\
	%	&\doteq c_{m} \widehat{(R^\alpha F)}(\xi)
	%\end{align*}
%where $R^\alpha := R_1^{\alpha_1} \circ R_2^{\alpha_2} \circ \cdots \circ R_N^{\alpha_N}$ is the higher order Riesz operator.   

	Since each $R_{j}$ is bounded from $L^p$ to itself for $1<p<\infty$ and of type weak$(1,1)$, we conclude that $\|I_m \mu\|_{L^p} \lesssim \|f\|_{L^p} < \infty$ that is $\mu$ has finite $(m,p)-$energy for $1<p\leq N/(N-m)$ and $\|I_m \mu\|_{L^{1,\infty}} \lesssim \|f\|_{L^1} < \infty$ for $p=1$. It follows from Proposition \ref{finite Im energy} that $\mu \equiv 0$ in $\R^N$.
	\Qed
\end{proof}

Next we prove the second part of the Theorem A.

\begin{proposition}\label{p grande coef const}
	Let $N/(N-m) < p < \infty$ and $\mu \in \M_{+}(\R^N,E)$. If $f \in L^p(\R^N,F)$ is a solution for $A^{\ast}(D)f = \mu$, then $\mu$ has finite $(m,p)$-energy. Conversely, if $|\mu|$ has finite $(m,p)-$energy, then there exists a function $f \in L^p(\R^N,F)$ solving $A^{\ast}(D)f = \mu$.
\end{proposition}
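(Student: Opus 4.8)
The plan is to prove the two implications of Proposition \ref{p grande coef const} separately, the forward one being essentially a repetition of the computation in Proposition \ref{p pequeno coef const} and the converse one using Riesz potential estimates together with ellipticity.

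For the forward direction, suppose $f\in L^p(\R^N,F)$ solves $A^{*}(D)f=\mu$ with $N/(N-m)<p<\infty$. I would invoke exactly the identity \eqref{riesz} established in the proof of Proposition \ref{p pequeno coef const}, namely
\[
I_m\mu = c_{m,N}\sum_{|\alpha|=m} a_\alpha^{*} R^\alpha f,
\]
which was derived purely from the Gauss--Green computation and the Fourier-side identification of the kernels $K_{j_\alpha}$, without any restriction on $p$. Since each $R^\alpha$ is a composition of Riesz transforms, it is bounded on $L^p$ for $1<p<\infty$, and here $p>N/(N-m)>1$, so $\|I_m\mu\|_{L^p}\lesssim\|f\|_{L^p}<\infty$; that is, $\mu$ has finite $(m,p)$-energy. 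This half is routine given the earlier work.

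For the converse, assume $|\mu|$ has finite $(m,p)$-energy and $A(D)$ is elliptic. The idea is to produce $f$ by solving the equation on the Fourier side: one wants $\widehat{A^{*}(\xi)}\,\widehat f(\xi)=\widehat\mu(\xi)$, and since $A(D)$ is elliptic, $A(\xi)$ is injective for $\xi\neq0$, so $A^{*}(\xi)A(\xi)$ is invertible on $E$ with $|(A^{*}(\xi)A(\xi))^{-1}|\lesssim|\xi|^{-2m}$ by homogeneity. Thus the natural candidate is
\[
f := A(D)\bigl[(A^{*}(D)A(D))^{-1}\mu\bigr],
\]
interpreted via Fourier multipliers; formally $\widehat f(\xi) = A(\xi)\,(A^{*}(\xi)A(\xi))^{-1}\,\widehat\mu(\xi)$, and then $A^{*}(D)f = \mu$. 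The point is to check $f\in L^p$. Writing $m_{\alpha}(\xi)$ for the $\alpha$-component of the multiplier $A(\xi)(A^{*}(\xi)A(\xi))^{-1}$, each $m_\alpha$ is homogeneous of degree $-m$ and smooth away from the origin, so $m_\alpha(\xi)=\widetilde m_\alpha(\xi)\,|\xi|^{-m}$ with $\widetilde m_\alpha$ homogeneous of degree $0$ and smooth on $\R^N\setminus0$. Hence $f$ is, componentwise, a finite sum of Calder\'on--Zygmund-type operators (zero-order, with smooth homogeneous symbol) applied to $I_m\mu$ (the operator with symbol $c|\xi|^{-m}$). Since $I_m\mu\in L^p$ by hypothesis (note $|I_m\mu_\ell|\le I_m|\mu_\ell|\le I_m|\mu|$, and by the remark after Definition \ref{def2} each component is controlled) and zero-order CZ operators are bounded on $L^p$ for $1<p<\infty$, we get $f\in L^p(\R^N,F)$. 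It then remains to verify $A^{*}(D)f=\mu$ rigorously, which follows by testing against $C_c^\infty$ functions and using the multiplier identity, since both sides have the same Fourier transform as tempered distributions.

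The main obstacle is the rigorous justification of the converse construction at the level of distributions rather than formal Fourier manipulation: $\mu$ is only a measure, so $\widehat\mu$ is merely a bounded continuous function (not decaying), and one must make sense of $f$ as a genuine $L^p$ function and of the identity $A^{*}(D)f=\mu$ in the sense of distributions. I would handle this by first noting that $I_m\mu$ is a well-defined $L^p$ function (finite energy hypothesis), then defining $f$ by applying the bounded zero-order operators to $I_m\mu$ and observing, via Parseval against Schwartz test functions, that $\langle A^{*}(D)f,\varphi\rangle = \langle f, A(D)^{\dagger}\varphi\rangle$ unwinds to $\langle \mu,\varphi\rangle$ because the composed symbol $A^{*}(\xi)\cdot A(\xi)(A^{*}(\xi)A(\xi))^{-1}$ is the identity on $E$ for $\xi\neq0$ — a null set for the absolutely continuous part and harmless because $\mu$-as-distribution is captured by its action on test functions whose transforms vanish at $0$ to the needed order, or more simply by a density/approximation argument mollifying $\mu$. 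A clean alternative, which I would mention, is to first prove it for $\mu$ smooth and compactly supported, obtain the uniform bound $\|f\|_{L^p}\lesssim\|I_m\mu\|_{L^p}$, and then pass to the limit by mollification $\mu*\rho_\eps$, extracting a weak-$*$ limit in $L^p$ and checking the equation survives the limit.
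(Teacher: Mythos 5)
Your forward direction is exactly the paper's: identity \eqref{riesz} plus $L^p$-boundedness of the iterated Riesz transforms, so nothing to add there. For the converse you take a genuinely different route. The paper argues by duality: it introduces the space $w^{m,p'}_A(\R^N,E)$, the closure of $C_c^\infty$ under $\|u\|_{m,p'}=\|A(D)u\|_{L^{p'}}$, uses the representation \eqref{decfrac} (built from the same symbol $H(\xi)=(A^*(\xi)A(\xi))^{-1}A^*(\xi)$ that underlies your multiplier) to get $|u|\le I_m|A(D)u|$, deduces from the finite $(m,p)$-energy of $|\mu|$ that $\mu$ is a bounded functional on $w^{m,p'}_A$, and then obtains $f\in L^p$ non-constructively from the surjectivity of the adjoint of the isometry $A(D):w^{m,p'}_A\to L^{p'}$ (Hahn--Banach plus $L^{p'}$--$L^p$ duality). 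You instead build $f$ explicitly as a zero-order Mikhlin/Calder\'on--Zygmund operator applied to $I_m\mu$, which buys an explicit solution operator and the quantitative bound $\|f\|_{L^p}\lesssim\|I_m|\mu|\|_{L^p}$, at the price of the distributional bookkeeping you correctly identify as the main obstacle; the paper's functional-analytic route sidesteps that entirely. Your plan for closing that step is workable but note that mollification alone does not finish it, since $A^*(D)f_\eps=\mu*\rho_\eps$ for the mollified (still non-compactly-supported) data raises the same Fourier justification; you should either truncate and mollify ($(\charchi_{B_R}\mu)*\rho_\eps$, with $I_m$ of the truncations converging in $L^p$ by domination by $I_m|\mu|$), or verify the equation directly by testing: for $\varphi\in C_c^\infty(\R^N,E)$ the adjoint multiplier identity gives $T^*A(D)\varphi=c(-\Delta)^{m/2}\varphi$, and then $\int I_m\mu\cdot(-\Delta)^{m/2}\varphi\,dx=\int\varphi\,d\mu$ by Fubini (legitimate because $\int\langle y\rangle^{m-N}d|\mu|(y)<\infty$ when $I_m|\mu|$ is finite a.e.) together with $I_m[(-\Delta)^{m/2}\varphi]=\varphi$, which is in essence the paper's identity \eqref{decfrac} again. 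With either completion your argument is correct.
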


\begin{proof}
{The first part follows from identity \eqref{riesz} and the boundedness of $\alpha$ order Riesz transform operators}. For the converse
consider the function $\xi\mapsto H(\xi)\in \mathcal{L}(F,E)$ defined by
\[H(\xi)=(A^{\ast}\circ A)^{-1}(\xi)A^{\ast}(\xi) \]
that is smooth in $\R^{N}\backslash \left\{0\right\} $ and homogeneous of 
degree $-m$. Here $A^{\ast}(\xi)$ is the symbol of the adjoint operator $A^{\ast}(D)$.
Since we are assuming that $1 \leq m<N$ then  $H$ is a locally integrable  tempered distribution and its inverse Fourier transform $K(x)$
%\[
%K\doteq\F^{-1}H,  \quad \widehat K=H,
%\] 
is a locally integrable tempered distribution homogeneous of degree $-N+m$ (see \cite[p.~71]{Javier}) that satisfies
\begin{equation}\label{decfrac}
u(x)=\int_{\R^{N}}K(x-y)[A(D)u(y)]\,dy, \quad u \in C_{c}^{\infty}(\R^{N},E). 
\end{equation}
and clearly  $|u(x)|\leq  I_m |A(D)u|(x)$.
	Let $w^{m,p'}_A(\R^N,E)$ be the closure of $\ccinf(\R^N,E)$ with respect to the norm $\|u\|_{m,p'} \doteq \|A(D)u\|_{L^{p'}}$. Thus %for each $\ell \in \{1,\dots,d\}$, assuming $\mu_{\ell} \in \mathcal{M}_{+}(\R^{N})$ by simplicity, we have %By \eqref{ineq riesz},
	\begin{align*}
		\left|\int_{\R^N} u(x) \, d\mu(x) \right| \lesssim \int_{\R^N} \left[\int_{\R^N} \dfrac{|A(D)u(y)|}{|x-y|^{N-m}} \, dy\right] d|\mu|(x)
		%&\lesssim \int_{\R^N} |A(D)u(y)| \left[\int_{\R^N} \dfrac{1}{|x-y|^{N-m}} \, d|\mu|(x)\right] dy\\
		&\lesssim  \int_{\R^N} |A(D)u(y)| \, I_m |\mu|(y) \, dy\\
		%&\leq \|A(D)u\|_{L^{p'}} \, \|I_m |\mu|\|_{L^p}\\
		& \leq \|u\|_{m,p'} \, \|I_m |\mu|\|_{L^p} \lesssim \|u\|_{m,p'},
	\end{align*}
	since $|\mu|$ has finite $(m,p)$-energy %Taking all the components  %The previous control follows analogously for $\mu_{\ell}^{Re}$ and $\mu_{\ell}^{Im}$.  As
%\begin{equation}
%\left|\int_{\R^N} u(x) \, d\mu(x) \right| \lesssim  \|u\|_{m,p'}, \quad \quad u \in C_{c}^{\infty}(\R^{N},E) 
%\end{equation}
following that $\mu \in [w^{m,p'}_A(\R^N,E)]^*$. Since $A(D): w^{m,p'}_A(\R^N,E) \to L^{p'}(\R^N,F)$ is a linear isometry, hence its adjoint $A^*(D): L^{p}(\R^N,F) \to [w^{m,p'}_A(\R^n,E)]^*$ is surjective. Therefore, there exists $f \in L^{p}(\R^N,F)$ such that $A^*(D)f = \mu$. \Qed
\end{proof}

\begin{corollary}
Let $N/(N-m) < p < \infty$ and $E,F$ finite dimensional real vector spaces. Then $\mu \in \mathcal{M}_{+}(\R^{N},E)$
has finite $(m,p)-$energy if and only if  there exists a function $f \in L^p(\R^N,F)$ solving $A^{\ast}(D)f = \mu$.
\end{corollary}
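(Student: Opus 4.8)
The plan is to derive the Corollary from Proposition \ref{p grande coef const}, since almost everything is already contained there. The forward implication — if $A^{\ast}(D)f=\mu$ admits a solution $f\in L^{p}(\R^{N},F)$, then $\mu$ has finite $(m,p)$-energy — is exactly the first assertion of Proposition \ref{p grande coef const}: one uses the representation \eqref{riesz} together with the $L^{p}$-boundedness of the $\alpha$-order Riesz transforms for $1<p<\infty$. So for that direction no new argument is required.

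For the converse the only gap is that Proposition \ref{p grande coef const} produces a solution from the hypothesis that $|\mu|$ (rather than $\mu$ itself) has finite $(m,p)$-energy. Hence it suffices to observe that, when $E$ is a finite-dimensional \emph{real} vector space, the conditions ``$\mu$ has finite $(m,p)$-energy'' and ``$|\mu|$ has finite $(m,p)$-energy'' are equivalent. Writing $\mu=(\mu_{1},\dots,\mu_{d})$ with $d=\dim_{\R}E$, each $\mu_{\ell}$ is a nonnegative Borel measure, so $I_{m}\mu_{\ell}(x)\ge 0$ for every $x$, and by linearity and positivity of $I_{m}$ we get $I_{m}|\mu|(x)=\sum_{\ell=1}^{d}I_{m}\mu_{\ell}(x)$, which is the $\ell^{1}$-norm of the vector $\bigl(I_{m}\mu_{1}(x),\dots,I_{m}\mu_{d}(x)\bigr)$ whose $\ell^{2}$-norm is $|I_{m}\mu(x)|$. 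Comparability of the $\ell^{1}$ and $\ell^{2}$ norms on $\R^{d}$ then gives the pointwise bounds
\[
|I_{m}\mu(x)|\ \le\ I_{m}|\mu|(x)\ \le\ \sqrt{d}\,|I_{m}\mu(x)|,
\]
so $\|I_{m}|\mu|\|_{L^{p}}<\infty$ if and only if $\|I_{m}\mu\|_{L^{p}}<\infty$.

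With this equivalence in hand the Corollary follows: if $\mu$ has finite $(m,p)$-energy then so does $|\mu|$, and since $A(D)$ is elliptic throughout this section the converse part of Proposition \ref{p grande coef const} produces $f\in L^{p}(\R^{N},F)$ with $A^{\ast}(D)f=\mu$; conversely any such $f$ forces finite $(m,p)$-energy of $\mu$ by the first part of that proposition. I do not expect a genuine obstacle here — the only substantive point is that positivity of an $E$-valued measure with real $E$ makes all components $\mu_{\ell}$ honestly nonnegative, which is precisely what upgrades the one-sided inequality $|I_{m}\mu|\le I_{m}|\mu|$ to the two-sided comparison above; over $\C$ one would only retain the one-sided bound and the reduction would fail.
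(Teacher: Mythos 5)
Your argument is correct and is exactly the route the paper intends: the corollary is left without proof precisely because, for real $E$, the components of a positive $E$-valued measure are genuinely nonnegative, so $|I_m\mu(x)|\le I_m|\mu|(x)\le\sqrt{d}\,|I_m\mu(x)|$ pointwise and both implications reduce to Proposition \ref{p grande coef const}, whose standing assumption in that section is that $A(D)$ is elliptic. Your closing aside about the complex case is inessential (and, under the paper's convention that a positive complex measure has positive real and imaginary parts, a two-sided comparison would in fact still hold), but it does not affect the proof of the stated corollary.
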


\section{Proof of Theorem \ref{theoB}}\label{sec4}

%We now proceed to obtain a $L^\infty$ solution for \eqref{main1}. 

In order to prove Theorem \ref{theoB} it is enough to show  that
\eqref{ineqmedida} holds.
 %, under its hypothesis, we have
%\begin{equation*}
%	\left|\int_{\R^N} u(x) \, d\mu(x) \right| \lesssim \|A(D)u\|_{L^1}, \quad \quad \forall \,\, u \in \ccinf(\R^N,E).
%\end{equation*}
In fact, assuming the validity of this inequality we conclude that $\mu \in [w^{m,1}_A(\R^N,E)]^*$ and \textit{bis in idem} the argument used in the proof of Proposition \ref{p grande coef const} there exists $f \in L^\infty(\R^N,F)$ such that $A^*(D)f = \mu$.
%In order to prove ** 
From the identity \eqref{decfrac}, since $A(D)$ is elliptic, the inequality \eqref{ineqmedida} is equivalent to  
\begin{equation}\label{eqmain}
	\left|\int_{\R^N}  \left[\int_{\R^N}K(x-y)g(y) \, dy\right] \, d\mu(x) \right| \lesssim \|g\|_{L^1}
\end{equation}
 where $g:=A(D)u$ for all  $u \in \ccinf(\R^N,E)$ and moreover
\begin{equation} \label{ka} 
 |K(x-y)| \leq C \; |x - y|^{m-N} , \quad x \neq y 
\end{equation}
and 
\begin{equation} \label{kb} 
 |\partial_y K(x-y)| \leq C \; |x - y|^{m-N-1} , \quad 2|y| \leq |x|.
\end{equation}
 The proof reduces to obtaining inequality \eqref{eqmain} invoking a special class of vector fields in $L^1$ norm associated to an elliptic and canceling operator $A(D)$ {and $\mu$ satisfying \eqref{stronger m} and \eqref{localcondition}. } %and an appropriated decay of $\mu$ on balls depending on the radius and on the order of the operator \textcolor{red}{and a special potential...} 
  
The first step is an extension of Hardy-type inequality \cite[Lemma 2.1]{DNP} on {two} measures, which we present for the sake of completeness.
\begin{lemma}\label{weight1}
Let $1 \leq q <\infty$ and $\nu$ be a $\sigma$-finite real positive measure. Suppose $\tilde{u}$ and $\tilde{v}$ are measurable and nonnegative
%and finite
almost everywhere. Then
\begin{equation}\label{w1}
\left[\int_{\R^N} \left( \int_{B_{|x|/2}} \tilde{g}(y) \, dy \right)^q \tilde{u}(x) \, d\nu(x)\right]^{1/q} \lesssim \int_{\R^N} \tilde{g}(x) \tilde{v}(x) \, dx
\end{equation}
holds for all $\tilde{g} \geq 0$  if and only if 
\begin{equation}\label{w2}
C := \sup_{R>0} \left(\int_{(B_R)^c} \tilde{u}(x) \, d\nu(x)\right)^{1/q} \left(\sup_{x \in B_R} [\tilde{v}(x)]^{-1}\right) < \infty.
\end{equation}
%Analogously
%\begin{equation}\label{w3}
%\left[ \int_{\R^{N}} \left(\int_{(B_{|x|/2})^c} \tilde{g}(y) \, dy \right)^q \tilde{u}(x) \, d\nu(x) \right]^{1/q} \lesssim \int_{\R^N} \tilde{g}(x)\tilde{v}(x)dx
%\end{equation}
%holds for all $\tilde{g} \geq 0$  if and only if 
%\begin{equation}\label{w4}
%\tilde{A}:=\sup_{R>0}\left( \int_{B_R} \tilde{u}(x) \, d\nu(x) \right)^{1/q}\left( \sup_{x \in (B_R)^c} [\tilde{v}(x)]^{-1} \right) < \infty.
%\end{equation}
\end{lemma}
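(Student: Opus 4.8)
The plan is to prove this as an instance of the classical weighted Hardy inequality for the averaging-type operator $\tilde g \mapsto \int_{B_{|x|/2}} \tilde g(y)\,dy$, adapted to the presence of the measure $\nu$ on the target side. The key observation is that the operator in question is monotone: enlarging the domain of integration only increases the output, so the operator behaves like a one-dimensional Hardy operator once we foliate $\R^N$ by spheres. First I would prove the easy direction, \eqref{w1} $\Rightarrow$ \eqref{w2}: fix $R>0$ and test \eqref{w1} with $\tilde g = \charchi_{B_{R/2}} \cdot \tilde g_0$ for a suitable nonnegative $\tilde g_0$; since for $|x| > R$ one has $B_{R/2} \subseteq B_{|x|/2}$, the inner integral on the left is at least $\int_{B_{R/2}} \tilde g_0(y)\,dy$ on the set $(B_R)^c$, which after optimizing the choice of $\tilde g_0$ (roughly, concentrating it where $\tilde v$ is smallest on $B_{R/2}$, i.e. near where $\sup_{x\in B_R}[\tilde v(x)]^{-1}$ is attained) yields exactly the finiteness of $C$. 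One must be slightly careful replacing $R/2$ by $R$ in the definition of $C$; a change of the constant absorbs this.

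For the main direction, \eqref{w2} $\Rightarrow$ \eqref{w1}, I would reduce to the radial setting. Write $|x| = \rho$ and note $\int_{B_{|x|/2}}\tilde g(y)\,dy = G(\rho/2)$ where $G(t) := \int_{B_t}\tilde g(y)\,dy$ is nondecreasing in $t$. Then the left side of \eqref{w1} is $\bigl[\int_{\R^N} G(|x|/2)^q\, \tilde u(x)\,d\nu(x)\bigr]^{1/q}$, which we bound by decomposing $\R^N = \bigcup_k A_k$ into dyadic annuli $A_k = \{2^k \le |x| < 2^{k+1}\}$, using that $G(|x|/2) \le G(2^k)$ on $A_k$, and that $\int_{A_k}\tilde u\,d\nu \le \int_{(B_{2^k})^c}\tilde u\,d\nu$. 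This gives an upper bound of the form $\sum_k G(2^k)^q \bigl(\int_{(B_{2^k})^c}\tilde u\,d\nu\bigr)$. On the other side, $G(2^k) = \int_{B_{2^k}}\tilde g = \sum_{j \le k}\int_{A_{j-1}}\tilde g$ (with appropriate indexing of annuli), and on $A_{j-1}$ we have $\tilde g(y) \le \tilde g(y)\tilde v(y)\cdot \sup_{B_{2^j}}\tilde v^{-1}$. Denoting $a_j := \int_{A_{j-1}}\tilde g(y)\tilde v(y)\,dy$ and $b_k := \bigl(\int_{(B_{2^k})^c}\tilde u\,d\nu\bigr)^{1/q}$, the hypothesis \eqref{w2} says $b_k \cdot \sup_{B_{2^k}}\tilde v^{-1} \le C$, and the desired inequality becomes the discrete Hardy-type bound
\[
\Bigl(\sum_k b_k^q \bigl(\textstyle\sum_{j\le k} a_j \sup_{B_{2^j}}\tilde v^{-1}\bigr)^q\Bigr)^{1/q} \lesssim \sum_j a_j,
\]
which follows from the monotone weighted Hardy inequality on sequences (or directly: since $\sup_{B_{2^j}}\tilde v^{-1}$ is nondecreasing in $j$ and $b_k$ is nonincreasing in $k$, group terms and use $b_k \sup_{B_{2^k}}\tilde v^{-1}\le C$ together with $q\ge1$ and the triangle inequality in $\ell^q$). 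I would present this last step via the standard integral form rather than sequences if that reads more cleanly, but the dyadic discretization makes the structure transparent.

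The main obstacle is purely bookkeeping: keeping the powers of $2$, the shift between $|x|/2$ and $|x|$, and the off-by-one in the annulus indexing consistent, and ensuring that the passage from the continuous operator to the dyadic sum does not lose the sharp constant structure (only multiplicative constants, which is all \eqref{w1} requires since it is stated with $\lesssim$). A secondary point requiring a line of care is the case $q=1$, where the $\ell^q$ triangle inequality degenerates to an interchange of summation — this is actually the simplest case, so no genuine difficulty arises. Measurability and $\sigma$-finiteness of $\nu$ are used only to justify Fubini/Tonelli when expanding $\int_{(B_{2^k})^c}\tilde u\,d\nu$ as a sum over annuli, which is routine. Throughout, all quantities are nonnegative, so every interchange of sum and integral is legitimate by Tonelli.
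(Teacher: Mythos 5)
Your necessity argument is essentially the paper's own (the paper tests \eqref{w1} on characteristic functions of finite-positive-measure subsets of $B_R$ on which $[\tilde v]^{-1}$ is within $1/n$ of its essential supremum), and the half-radius slack you propose to absorb into the constant is present in the paper's proof as well, which ends up pairing $\int_{(B_{2R})^c}\tilde u\,d\nu$ with $\sup_{B_R}[\tilde v]^{-1}$; so no objection there. The genuine gap is in the sufficiency direction. When you pass from $\sum_k G(2^k)^q\int_{A_k}\tilde u\,d\nu$ to $\sum_k G(2^k)^q\int_{(B_{2^k})^c}\tilde u\,d\nu$ you replace each annulus mass by the full tail $b_k^q$; since $b_k^q$ is itself the sum of all later annulus masses, summing it over $k$ overcounts each annulus without bound, and the discrete inequality you are then left to prove,
\begin{equation*}
\Bigl(\sum_k b_k^q\bigl(\textstyle\sum_{j\le k}a_j s_j\bigr)^q\Bigr)^{1/q}\lesssim\sum_j a_j,\qquad s_j:=\sup_{x\in B_{2^j}}[\tilde v(x)]^{-1},
\end{equation*}
is false under the available hypotheses ($b_k$ nonincreasing, $s_j$ nondecreasing, $b_ks_k\le C$). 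Take $s_k^q=k$ and $b_k^q=1/k$ (realizable by a radial $\tilde u\,d\nu$ with annulus masses $1/k-1/(k+1)$ and $[\tilde v(y)]^{-1}\sim(\log_2 |y|)^{1/q}$, for which \eqref{w2} does hold), and let $a_j=1$ for a single index $j_0$ and $a_j=0$ otherwise: the left side equals $\bigl(s_{j_0}^q\sum_{k\ge j_0}b_k^q\bigr)^{1/q}=\infty$, while the right side is $1$. So the appeal to a ``monotone weighted Hardy inequality on sequences,'' via grouping and the $\ell^q$ triangle inequality, cannot close this step: the statement being invoked is itself false, precisely because $b_ks_k\le C$ controls a single tail against a single supremum, not the sum of tails over all scales.

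The repair is to keep the annulus masses $\beta_k:=\int_{A_k}\tilde u\,d\nu$ and apply Minkowski's inequality in $\ell^q(\beta)$ only afterwards: with $T_k:=\sum_{j\le k}a_js_j$ one gets $\bigl(\sum_k\beta_k T_k^q\bigr)^{1/q}\le\sum_j a_js_j\bigl(\sum_{k\ge j}\beta_k\bigr)^{1/q}=\sum_j a_js_jb_j\le C\sum_j a_j$, and your dyadic scheme then works. But note that this corrected argument is exactly the paper's proof in discretized form: the paper applies the integral Minkowski inequality directly to the left-hand side of \eqref{w1}, obtaining the bound $\int_{\R^N}\tilde g(y)\bigl(\int_{(B_{2|y|})^c}\tilde u(x)\,d\nu(x)\bigr)^{1/q}dy$, and then controls the bracket by $C\,\tilde v(y)$ using \eqref{w2}; no dyadic decomposition or discrete Hardy lemma is needed, and no tail is ever summed over scales. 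I would drop the discretization altogether.
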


\begin{proof}
%First we prove \eqref{w2} implies \eqref{w1}. 
By Minkowski inequality we have
\begin{align*}
	\left[\int_{\R^N} \left( \int_{B_{|x|/2}} \tilde{g}(y) \, dy \right)^q \tilde{u}(x)  \, d\nu(x)\right]^{1/q}
	%&= \left[\int_{\R^N} \left( \int_{\R^N} \tilde{g}(y) \, \charchi_{\{2|y|<|x|\}}(x,y) \, dy \right)^q \tilde{u}(x) \, d\nu(x)\right]^{1/q}\\
	%&\leq \int_{\R^N} \left( \int_{\R^N} [\tilde{g}(y)]^q \, \charchi_{\{2|y|<|x|\}}(x,y) \, \tilde{u}(x) \, d\nu(x) \right)^{1/q} \, dy\\
	& \leq \int_{\R^N} \tilde{g}(y) \, \left( \int_{(B_{2|y|})^c} \tilde{u}(x) \, d\nu(x) \right)^{1/q} \, dy\\
	& \leq C \int_{\R^N} \tilde{g}(y) \, \tilde{v}(y) \, dy,
\end{align*}
since
\begin{align*}
	\left( \int_{(B_{2|y|})^c} \tilde{u}(x) \, d\nu(x) \right)^{1/q} [\tilde{v}(y)]^{-1} &\leq \left( \int_{(B_{2|y|})^c} \tilde{u}(x) \, d\nu(x) \right)^{1/q} \left(\sup_{x \in B_{2|y|}} [\tilde{v}(x)]^{-1}\right)
	\leq C.
\end{align*}

%For \eqref{w1} implies \eqref{w2}, consider

 Conversely, for $R>0$ consider $S(R) := \ds\esssup_{z \in B_R} [\tilde{v}(z)]^{-1}$. For each $n \in \N$, we define the set 
$\widetilde{M_n} := \left\{z \in B_R : [\tilde{v}(z)]^{-1} > S(R) - \dfrac{1}{n}\right\}$.
From the definition follows $|\widetilde{M_n}|>0$, hence there exist $M_n \subseteq \widetilde{M_n}$ with $0 < |M_n| < \infty$. Choosing  $\widetilde{g}(y) = \charchi_{M_n}(y)$ and using \eqref{w1},we have
\begin{align*}
	\left(\int_{(B_{2R})^c} \tilde{u}(x) \, d\nu(x)\right)^{1/q}  %&= \left[\int_{(B_R)^c} \left(\int_{B_{R}} \charchi_{M_n}(y) \, dy\right)^q \, \tilde{u}(x) \, d\nu(x)\right]^{1/q}\\
	\leq \frac{1}{ |M_n| } \left[\int_{\R^N} \left(\int_{B_{|x|/2}} \charchi_{M_n}(y) \, dy\right)^q \, \tilde{u}(x) \, d\nu(x)\right]^{1/q}
	&\lesssim \fint_{M_n} \tilde{v}(x) \, dx \\
	&\lesssim  \left(S(R)-\dfrac{1}{n}\right)^{-1}.
\end{align*}
Taking $n \to \infty$ we get
%\begin{align*}
$	\displaystyle{\left(\int_{(B_{2R})^c} \tilde{u}(x) \, d\nu(x)\right)^{1/q} S(R) \lesssim 1}$
%\end{align*}
and the result follows since the control is uniform on $R>0$.
%The proof is analogous for \eqref{w3} $\iff$ \eqref{w4}. 
\Qed
\end{proof}

One fundamental property of elliptic and canceling operators $A(D)$ is the existence of a homogeneous linear differential operators $L(D): C^{\infty}(\R^N,F) \rightarrow C^{\infty}(\R^N,V)$ of order $\kappa$ for some finite dimensional complex vector space $V$ such that %$L(D)\circ A(D)=0$ (see ***).  
\begin{equation}\label{cocanceling}
\displaystyle{\bigcap_{\xi \in \R^{N}\backslash 0}\,ker \,L(\xi)\,=\,\bigcap_{\xi \in \R^{N}\backslash 0}\,A(\xi)[E]=\left\{ 0 \right\}}.
\end{equation}
The next definition was also introduced by Van Schaftingen in \cite{VS}:
\begin{definition}\label{de1.1} %\cite{VS}
 Let $L(D)$ be a homogeneous linear differential operators of order $\kappa$ on $\R^{N}$ from $F$ to $V$.
The operator $L(D)$ is cocanceling if 
$$\displaystyle{\bigcap_{\xi \in \R^{N}\backslash \left\{ 0 \right\}}ker\,L(\xi)=\left\{ 0 \right\}}.$$ 
\end{definition}
An example of cocanceling operator on $\R^N$ from $F=\R^N$ to $V=\R$ is the divergence operator $L(D)=\diver$. Indeed, for every $e \in \R^N$ we have $L(\xi)[e]=\xi \cdot e$ and then clearly 
$$\bigcap_{\xi \in \R^{N}\backslash \left\{ 0 \right\}} ker\,L(\xi)=\bigcap_{\xi \in \R^{N}\backslash \left\{ 0 \right\}} \xi^{\perp}=\left\{ 0 \right\}.$$

%\medskip

%Our main result in this paper is a characterization of the Stein-Weiss inequality when $p=1$ for vector fields associated to the kernel of some cocanceling operators.% with special range .

 The following peculiar estimate for vector fields belonging to the kernel of some cocanceling operator was {presented} at \cite[Lemma~3.1]{DNP}.
     \begin{lemma}\label{mainlemma2}
 Let $L(D)$ be a homogeneous linear differential operators of order m on $\R^{N}$ from $F$ to $V$. Then 
%be a cocanceling operator as Definition \ref{de1.1}.   
%be an homogeneous linear differential operator of order $m$ on $\R^N$ from $F$ to $G$. 
there exists $C>0$ %and $m \in \Z^{*}_{+}$ 
such that for every $\varphi \in C_{c}^{m}(\R^{N};F)$, %that satisfies $|x|^{j}|D^{j}\varphi(x)| \in L^{1}_{loc}(\R^{N})$ for $j=0,....,m$ and for all $f \in C_{c}^{\infty}(\R^{N};F)$ such that $L(D)f=0$, 
we have 
\begin{equation}\label{main23}
\left| \int_{\R^N} \varphi(y)\cdot f(y)\,dy\right|\le 
C\sum_{j=1}^m \int_{\R^N}|f(y)|\,|y|^j\, |D^j\varphi (y)|\,dy 
\end{equation}
%\textcolor{red}{for all f %$f \in C^{\infty}(\R^{N};F)$ 
%such that  $L(D)f=0$ in the sense of distribution}.
for all functions $f \in L^{1}(\R^N;F)$ satisfying $L(D)f=0$  in the sense of distributions.
\end{lemma}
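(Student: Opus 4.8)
\textbf{Proof proposal for Lemma \ref{mainlemma2}.}

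The plan is to reduce the estimate to a statement about compactly supported, mean-zero (in the appropriate cocanceling sense) data by building a suitable primitive for the pairing $\varphi \cdot f$. Since $L(D)f = 0$ in the sense of distributions and $L(D)$ is cocanceling (here one only needs that $f$ lies in the kernel of some homogeneous operator of order $m$), the key structural input is that integration against $f$ annihilates a large space of test fields; concretely, the algebraic fact behind cocanceling operators — as developed by Van Schaftingen in \cite{VS} — is that every polynomial map into $F$ taking values, after applying $L(\xi)$, in a complementary subspace can be split off. First I would use the Fourier/algebraic characterization to write $\varphi$ near the origin as $\varphi = \varphi_0 + \psi$, where $\varphi_0$ is a polynomial of degree $\le m-1$ (a Taylor-type remainder) and $\psi$ vanishes to order $m$ at $0$; the point is that $\int \varphi_0 \cdot f\,dy$ can be handled because $L(D)f=0$ forces the moments of $f$ up to order $m-1$ that pair against $\mathrm{ker}\,L$-type polynomials to vanish, while the genuinely nontrivial part is controlled by $\psi$.

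The core estimate is then a weighted Hardy/Sobolev bound: if $\psi \in C^m_c$ vanishes to order $m$ at the origin, then $|\psi(y)| \lesssim \sum_{j=1}^m |y|^j |D^j\psi(y)|$ pointwise is too crude, so instead I would integrate along rays from the origin, writing $\psi(y) = \int_0^1 \frac{d}{dt}[\psi(ty)]\,\frac{dt}{\cdots}$ iteratively $m$ times to express $\psi$ via an integral of $D^m\psi$ against a kernel homogeneous of the right degree, and then — crucially — absorb the lower-order derivative terms $|y|^j|D^j\varphi|$ that appear when one does the Taylor expansion with integral remainder. In other words, the clean way is: expand $\varphi$ around $0$ with integral remainder of order $m$, so that $\varphi(y) = P(y) + \int_0^1 \frac{(1-t)^{m-1}}{(m-1)!} D^m\varphi(ty)[y,\dots,y]\,dt$ with $P$ a polynomial of degree $\le m-1$; the remainder integral is dominated, after changing variables $z = ty$, by $\int_{\R^N} |f(y)| \int_{|z|\le|y|} |D^m\varphi(z)|\,|z|^{?}\,\cdots$ which one recognizes (via Fubini) as exactly the right-hand side, and the polynomial term $\int P \cdot f$ vanishes by the cocanceling annihilation property.

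The step I expect to be the main obstacle is justifying that $\int_{\R^N} P(y)\cdot f(y)\,dy = 0$ for the relevant polynomials $P$ of degree $\le m-1$: this is false for arbitrary $P$, and the correct claim is only that it holds for $P$ in the image (under a Taylor coefficient map) of the constraint imposed by $L(D)f=0$ — so one must be careful to peel off from $\varphi$ precisely the polynomial part that $f$ "sees," and show the leftover lower-order terms are themselves bounded by $\sum_j \int |f|\,|y|^j|D^j\varphi|$. This is where the cocanceling hypothesis on $L(D)$ is genuinely used, via the Van Schaftingen decomposition of the polynomial test space; the rest is Taylor's theorem with integral remainder plus Fubini. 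A secondary technical point is the approximation argument needed because $f \in L^1$ only and $L(D)f=0$ holds distributionally — I would mollify $f$, prove the inequality for $f_\varepsilon = f * \rho_\varepsilon$ (which is smooth, still in the kernel, though no longer compactly supported, so a further cutoff plus a dominated-convergence/uniform-integrability argument near infinity using $|y|^j|D^j\varphi|$ compactly supported is required), and pass to the limit. Granting these, \eqref{main23} follows.
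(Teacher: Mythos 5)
You are attempting a result the paper itself does not prove: Lemma \ref{mainlemma2} is quoted from \cite[Lemma 3.1]{DNP} (whose argument goes back to Van Schaftingen), so the comparison is with that proof; note also that the cocancellation of $L(D)$, stated only in the surrounding text, is genuinely needed, since if some $e\neq 0$ lies in $\bigcap_{\xi\neq 0}\ker L(\xi)$ then $f_\varepsilon=e\,\varepsilon^{-N}\chi(\cdot/\varepsilon)$ satisfies $L(D)f_\varepsilon=0$ and violates \eqref{main23} as $\varepsilon\to 0$. Your proposal has a genuine gap at its central step. The vanishing of the Taylor-polynomial contribution is not available: cocancellation gives only $\int_{\R^N}f\,dy=0$ (from $L(\xi)\hat f(\xi)=0$ and continuity of $\hat f$ at $\xi=0$), not the vanishing of moments of orders $1,\dots,m-1$. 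Those higher moments need not vanish -- for the double divergence $L(D)f=\sum_{i,j}\partial_i\partial_jf_{ij}$ on symmetric matrix fields (cocanceling, $m=2$) one can take $\hat f(\xi)=iA(\xi)\eta(\xi)$ with $A(\xi)$ linear in $\xi$, $A\neq 0$, $\xi^{T}A(\xi)\xi\equiv 0$ and $\eta$ Schwartz, $\eta(0)=1$, producing a Schwartz solution of $L(D)f=0$ with nonzero first moments -- and for a general $f\in L^{1}$ they need not even be finite, so $\int P\cdot f$ is not defined. You acknowledge this obstacle, but the proposed fix (``peel off the part of $P$ that $f$ sees'' via a decomposition of the polynomial test space) is not substantiated, and the surviving polynomial part cannot be absorbed into $\sum_{j\geq 1}\int |f|\,|y|^{j}|D^{j}\varphi(y)|\,dy$ by any mechanism you give. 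A second, independent problem is that Taylor's formula with integral remainder produces $D^{m}\varphi(ty)$, $t\in[0,1]$; after Fubini along rays this yields segment averages of $D^{m}\varphi$ over $[0,y]$, which are not dominated pointwise by $|y|^{m}|D^{m}\varphi(y)|$, so even the remainder does not give the stated right-hand side, where all derivatives are evaluated at $y$ itself.

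The actual proof avoids both issues and is purely a duality/Leibniz argument. For fixed $\xi\neq 0$, a scalar component $\varphi_i$ of $\varphi$ and $w^{*}\in V^{*}$, test the distributional identity $L(D)f=0$ against $\Phi(y)=\varphi_i(y)\,\frac{(\xi\cdot y)^{m}}{m!}\,w^{*}$. In $L^{*}(D)\Phi$ the term where all $m$ derivatives fall on the polynomial produces $\varphi_i(y)\,\langle L(\xi)f(y),w^{*}\rangle$, while every other Leibniz term carries a factor $\partial^{\beta}\varphi_i(y)$ with $1\leq|\beta|\leq m$ times a polynomial of degree $|\beta|$ in $y$; hence $\bigl|\int_{\R^N}\varphi_i(y)\,L(\xi)f(y)\,dy\bigr|\lesssim_{\xi}\sum_{j=1}^{m}\int_{\R^N}|f(y)|\,|y|^{j}|D^{j}\varphi(y)|\,dy$, which is exactly the pointwise-weighted right-hand side of \eqref{main23}. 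Cocancellation then supplies finitely many $\xi_1,\dots,\xi_r$ with $\bigcap_{k}\ker L(\xi_k)=\{0\}$, hence maps $T_k\in\mathcal{L}(V,F)$ with $\sum_k T_kL(\xi_k)=\mathrm{id}_F$, and summing over $k$ and over the components of $\varphi$ reconstructs $\int\varphi\cdot f$. Note that no mollification of $f$ is required, since the equation is used only against compactly supported test functions. Your route would need these two ideas (control of $\int\varphi\,L(\xi)f$ rather than of moments, and reconstruction of the identity from finitely many directions) to be repaired into a proof.
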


The second step to obtain \eqref{eqmain} is an improvement of \cite[Lemma 3.2]{DNP} and \cite[Lemma 2.1]{HP} in the setting of positive {Borel} measures. 

\begin{lemma}\label{main}
Assume $N \geq 2$, $0 < \ell < N$ and $K(x,y) \in L^{1}_{loc}(\R^N \times \R^N, \mathcal{L}(F;V))$ 
%a locally integrable function in $\R^N \times \R^N$ 
satisfying
\begin{equation} \label{kc} 
	|K(x, y)| \leq C \; |x - y|^{\ell-N} , \quad x \neq y 
\end{equation}
and 
\begin{equation} \label{kd}
	|K(x,y)-K(x,0)| \leq C \; \frac{|y|}{|x|^{N-\ell+1}} , \quad 2|y|\leq |x|.
\end{equation}
Suppose {$1 \leq q < \infty$} and let $\nu \in \mathcal{M}_{+}(\R^{N})$ satisfying %$0 \leq \alpha<1$, $\beta<N/q$, $\alpha+\beta>0$ and $\displaystyle{{1}/{q}=1+ ({\alpha+\beta-\ell})/{N}}$
\begin{eqnarray}\label{mu regularity}
	%\nu(B_R) \leq C_1 R^{(N-\ell)q}, \quad R>0,\\
{\|\nu\|_{0,(N-\ell)q} < \infty,}
\end{eqnarray}  
and the following uniform {potential} condition
\begin{equation} \label{mu integral}
 [[\nu]]_{(N-\ell)q}:=\sup_{y \in \R^{N}} \int_0^{{|y|/2}} \dfrac{\nu(B(y,r))}{r^{(N-\ell)q+1}}dr  < \infty.  %\, dr \leq C_2, \quad {\text{ uniformly on } y.}
\end{equation}
%\begin{equation} \label{mu x regularity}
%	\textcolor{red}{\nu(B(x,R)) \leq C_2 \, |x|^{(N-\ell+\alpha)q-N} R^N \quad \text{ when } R < \eps|x|}
%\end{equation}
%for some $0<\eps<1$, where $C_1$ and $C_2$ are positive constants.
If $L(D)$ is cocanceling then there exists $\widetilde{C}>0$ such that 
\begin{equation}\label{mainineq}
	\left( \int_{\R^N} \left| \int_{\R^N} K(x,y)g(y) \; dy \right|^q d\nu(x)  \right)^{1/q} 
\leq \widetilde{C} \int_{\R^{N}} |g(x)| \, dx, 
\end{equation}
for all $g \in L^{1}(\R^{N};F)$ satisfying $L(D)g=0$  %such that  %$f \in L^{1}(\R^N;F)$ such that 
%$L(D)f=0$ 
in the sense of distributions. %Conversely, if for all $f \in C^\infty_c(\R^N;F)$ satisfying $L(D)f=0$ the inequality \eqref{main5} holds then $L(D)$ has to be cocanceling. 
\end{lemma}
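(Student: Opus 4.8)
The plan is to estimate the inner convolution-type integral $\int_{\R^N} K(x,y)g(y)\,dy$ by exploiting the fact that $g$ is annihilated by the cocanceling operator $L(D)$, so that we may subtract off the ``frozen'' kernel value $K(x,0)$ at no cost. First I would split the domain of integration in $y$, for each fixed $x$, into the region $\{2|y|\le |x|\}$ and its complement $\{2|y|>|x|\}$, and write
\[
\int_{\R^N} K(x,y)g(y)\,dy = \int_{2|y|\le|x|}\bigl(K(x,y)-K(x,0)\bigr)g(y)\,dy + \int_{2|y|\le|x|}K(x,0)g(y)\,dy + \int_{2|y|>|x|}K(x,y)g(y)\,dy .
\]
For the first term, \eqref{kd} gives the pointwise bound $|K(x,y)-K(x,0)|\lesssim |y|/|x|^{N-\ell+1}$, so this piece is controlled by $|x|^{-(N-\ell+1)}\int_{B_{|x|/2}}|y|\,|g(y)|\,dy$. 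For the third term, on $\{2|y|>|x|\}$ we have $|x-y|\gtrsim|y|$ is false in general, but $|x|\le 2|y|$ forces $y$ to lie outside $B_{|x|/2}$, and using \eqref{kc} together with the elementary estimate $\int_{|x|<2|y|}|x-y|^{\ell-N}\,d\nu(x)$-type bounds (via the Ahlfors-type control \eqref{mu regularity}) this will be absorbed into an $L^1$ bound on $g$. The genuinely new ingredient enters in the second term.

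The key step is the treatment of $K(x,0)\int_{2|y|\le|x|}g(y)\,dy$. Here I would invoke Lemma \ref{mainlemma2}: since $L(D)g=0$, applying \eqref{main23} to a suitable cutoff of $g$ adapted to the ball $B_{|x|/2}$ shows that $\bigl|\int_{B_{|x|/2}}g(y)\,dy\bigr|$ is \emph{not} merely bounded by $\|g\|_{L^1}$ but in fact by $\sum_{j=1}^{\kappa}|x|^{-j}\int_{B_{|x|}}|g(y)|\,|y|^{j}\,dy$ plus boundary contributions from the cutoff annulus $B_{|x|}\setminus B_{|x|/2}$ — the point being that testing against the constant vector field (which is killed by the cocanceling hypothesis in the distributional pairing sense of Lemma \ref{mainlemma2}) produces the extra decay in $|x|$. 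Combining this with $|K(x,0)|\lesssim |x|^{\ell-N}$, the second term is dominated by $\sum_{j}|x|^{\ell-N-j}\int_{B_{|x|}}|g(y)|\,|y|^{j}\,dy$.

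At this stage every contribution has the schematic form $\int_{\R^N}\bigl(\int_{B_{c|x|}}|g(y)|\,|y|^{j}\,dy\bigr)^{\!1}|x|^{\ell-N-j}$ raised to the $q$-th power and integrated $d\nu(x)$; after raising to power $q$ and applying Minkowski's integral inequality I would reduce everything to a Hardy-type inequality of exactly the shape covered by Lemma \ref{weight1}, with the weights $\tilde u(x)=|x|^{(\ell-N-j)q}$ and $\tilde v(y)=|y|^{-j}$ (so that $\tilde g(y)=|g(y)|\,|y|^{j}$ and $\tilde g\tilde v=|g|$). The hypotheses \eqref{mu regularity} and \eqref{mu integral} are precisely what make the constant \eqref{w2} finite: \eqref{mu integral} is the truncated Wolff-potential bound $\int_0^{|y|/2}\nu(B(y,r))r^{-(N-\ell)q-1}\,dr<\infty$, which after a standard layer-cake/dyadic decomposition is equivalent to $\sup_R\bigl(\int_{(B_R)^c}|x|^{(\ell-N)q}\,d\nu(x)\bigr)R^{?}\lesssim 1$-type control, while \eqref{mu regularity} handles the near-diagonal and $j=0$ cases. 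I would carry out this verification (converting \eqref{mu integral} into the exact form \eqref{w2} by Fubini and summation over dyadic shells) as the computational core.

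The main obstacle I anticipate is bookkeeping the cutoff in the application of Lemma \ref{mainlemma2}: Lemma \ref{mainlemma2} applies to $\varphi\in C^{\kappa}_c$, whereas $\charchi_{B_{|x|/2}}$ is not smooth, so one must replace it by a smooth $\varphi_x$ with $\varphi_x\equiv 1$ on $B_{|x|/2}$, $\supp\varphi_x\subset B_{|x|}$, and $|D^j\varphi_x|\lesssim |x|^{-j}$, then separately estimate the error $\int (\varphi_x-\charchi_{B_{|x|/2}})g$, which is supported in the annulus and handled by the same Hardy machinery. Ensuring that all constants are uniform in $x$ and that the dyadic sum over $j=1,\dots,\kappa$ converges (it is a finite sum, so this is automatic once each term is controlled) is the last point to check. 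The ellipticity/cancellation of $A(D)$ itself does not appear directly here — it enters only through the existence of the cocanceling $L(D)$ via \eqref{cocanceling}, which is assumed as the hypothesis ``$L(D)$ is cocanceling.''
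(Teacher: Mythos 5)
Your overall skeleton is essentially the paper's: freeze the kernel at $y=0$ against a smooth cutoff (the paper simply defines $K_1(x,y)=\psi(y/|x|)K(x,0)$ with $\psi$ supported in $B_{1/2}$, which removes your ``main obstacle'' about smoothing $\charchi_{B_{|x|/2}}$), control that piece by Lemma \ref{mainlemma2} followed by the Hardy-type Lemma \ref{weight1}, and treat the remainder by Minkowski's inequality plus a bound on $\int |K_2(x,y)|^q\,d\nu(x)$ uniform in $y$. However, there is a genuine flaw in how you allocate the two hypotheses on $\nu$. For your third term (the region $2|y|>|x|$, which contains the near-diagonal set $x\approx y$) you claim the bound $\int_{|x|<2|y|}|x-y|^{(\ell-N)q}\,d\nu(x)\lesssim 1$ follows ``via the Ahlfors-type control \eqref{mu regularity}.'' It does not: \eqref{mu regularity} only controls $\nu$ on balls centered at the origin, and for example $\nu=\delta_{y_0}$ with $y_0\neq 0$ satisfies \eqref{mu regularity} while the above integral is infinite at $y=y_0$. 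This near-diagonal piece is exactly where the truncated Wolff-potential hypothesis \eqref{mu integral} is indispensable: one writes $|x-y|^{(\ell-N)q}$ by layer cake and gets $\int_{B(y,|y|/2)}|x-y|^{(\ell-N)q}\,d\nu(x)\lesssim [[\nu]]_{(N-\ell)q}+\|\nu\|_{0,(N-\ell)q}$, with the annular part $B_{4|y|}\setminus B(y,|y|/2)$ and the far region $|x|\ge 2|y|$ handled by \eqref{kc}, \eqref{kd} and \eqref{mu regularity}.

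Symmetrically, your description of the Hardy step has the roles reversed: the constant \eqref{w2} of Lemma \ref{weight1}, with the weights $\tilde u(x)=|x|^{(\ell-N-1)q}$ (or $|x|^{(\ell-N-j)q}$) and $\tilde v(y)=|y|^{-1}$ (or $|y|^{-j}$), is verified by summing over dyadic shells using only \eqref{mu regularity}; the condition \eqref{mu integral} plays no role there. Moreover, your assertion that \eqref{mu integral} is ``equivalent to'' a tail bound of the form $\sup_R\bigl(\int_{(B_R)^c}|x|^{(\ell-N)q}\,d\nu(x)\bigr)R^{?}\lesssim 1$ is false: \eqref{mu integral} is a uniform statement about balls $B(y,r)$ centered at arbitrary points $y$ with radii $r\le |y|/2$, and it neither implies nor follows from an origin-centered tail estimate. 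As written, your plan would either fail on the near-diagonal region or never use \eqref{mu integral} where it is actually needed; once the two hypotheses are reassigned as above, the argument closes along the same lines as the paper's proof.
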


\begin{remark}\label{remarkhp}
A stronger condition satisfying  \eqref{mu integral} is given by
\begin{equation} \label{mu x regularity}
	\nu(B(y,R)) \leq C_2 \, |y|^{(N-\ell)q-N} R^N 
\end{equation}
 when {$R < |y|/2$}. The integration boundary $|y|/2$ in \eqref{mu integral} can be swapped to $a|y|$, where $a$ is a fixed constant $0<a<1$. In this case, \eqref{mu x regularity} must hold for $R < a|x|$ to imply \eqref{mu integral}.
\end{remark}}

Let us present an example of positive measures satisfying \eqref{mu regularity} and \eqref{mu integral}.
Suppose $N\geq 2$, $0<\ell<N$, $1\leq q \leq N/(N-\ell) $ and define % $0 \leq  \beta<N/q$ and $\displaystyle{{1}/{q}=1+ {(\beta-\ell)/}{N}}$. 
$d\nu = |x|^{q(N-\ell)-N} dx$. %, where
%$\beta < N/q$,
%$0<\alpha+\beta \textcolor{red}{ \leq \ell}$ \textcolor{red}{(necess\'ario se queremos garantir que $q \geq 1$)} and $1/q = 1+(\alpha+\beta-\ell)/N$, 
The control \eqref{mu regularity} is obvious for the case when $q=N/(N-\ell)$, since $\nu$ is simply the Lebesgue measure and $(N-\ell)q = N$. Otherwise,
\begin{align*}
	\nu(B_R) &= \int_{B_R} |x|^{q(N-\ell)-N} \, dx \lesssim \int_0^R r^{q(N-\ell)-1} \, dr
	\lesssim R^{(N-\ell)q}.
\end{align*}
For \eqref{mu integral} we note that if  $y \in B_R$ and {$R < |x|/2$ then $|x|/2 <
%|x|-R < |x|-|y| \leq
|x+y|
%\leq |x|+|y| < |x|+R
< 3|x|/2$}
thus
\begin{align*}
	\nu(B(x,R)) = \int_{B_R} |x+y|^{q(N-\ell)-N} \, dy \lesssim |x|^{(N-\ell)q-N}R^{N}.
\end{align*}
%\begin{align*}
%	\nu(B(x,R)) \leq C_{\eps,\beta,q} \, |x|^{-\beta q} \left| B_R \right| = C_{\eps,\beta,q,N} \, |x|^{(N-\ell+\alpha)q-N} \, R^N.
%\end{align*}

%\textcolor{red}{parei aqui Tiago} \\

In order to prove the inequality \eqref{eqmain}, and consequently the Theorem B, we estimate 
\begin{equation}\nonumber
	\left|\int_{\R^N}  \left[\int_{\R^N}K(x-y)g(y) \, dy\right] \, d\mu(x) \right| \leq
	\int_{\R^N}  \left|\int_{\R^N}K(x-y)g(y) \, dy \right| \, d|\mu|(x) 
\end{equation}
and we apply the Lemma \ref{main} for $q=1$ and $\nu=|\mu|$, taking $K(x,y)=K(x-y)$ given by identity \eqref{decfrac} that satisfies  \eqref{ka} and \eqref{kb}. %as each component $\mu_{j}^{\real}$ and $\mu_{j}^{\imag}$ for $j=1,\dots,d$. 
Note that  \eqref{mu regularity} and \eqref{mu integral} come naturally from \eqref{stronger m} and \eqref{localcondition}. %, and moreover the kernel $K$ satisfies \eqref{ka} and \eqref{kb}. 
The conclusion follows taking $g:=A(D)u$ that belongs to the kernel of some cocanceling operator $L(D)$ from \eqref{cocanceling}, since $A(D)$ is canceling. 

Now we present the proof of Lemma \ref{main}.

\begin{proof}
Let $\psi \in \ccinf(B_{1/2},\R)$ be a cut-off function such that $0 \leq \psi \leq 1$, $\psi \equiv 1$ on $B_{1/4}$, and write $K(x,y) = K_1(x,y)+K_2(x,y)$ with $K_1(x,y) = \psi(y/|x|)K(x,0)$. We claim that
\begin{equation}\label{Ji}
J_j \doteq \left(\int_{\R^N} \left| \int_{\R^N} K_j(x,y) g(y) \, dy \right|^q d\nu(x)\right)^{1/q} \lesssim \int_{\R^N} |g(x)| \, dx
\end{equation}
for $j=1,2$ and $g \in L^1(\R^N;F)$ satisfying $L(D)g=0$  
in the sense of distributions.  

Using the control \eqref{kc} we may estimate
%To prove our claim we first focus on $J_1$:
\begin{align*}
J_1 &= \left(\int_{\R^N} \left| \int_{\R^N} \psi\left(\dfrac{y}{|x|}\right) g(y) \, dy \right|^q |K(x,0)|^q \, d\nu(x)\right)^{1/q}\\
&\lesssim \left(\int_{\R^N} \left| \int_{\R^N} \psi\left(\dfrac{y}{|x|}\right) g(y) \, dy \right|^q |x|^{(\ell-N)q} \, d\nu(x)\right)^{1/q}\\
&\lesssim \left(\int_{\R^N} \left[ \int_{B_{|x|/2}} \dfrac{|y|}{|x|} |g(y)| \, dy \right]^q |x|^{(\ell-N)q} \, d\nu(x)\right)^{1/q}\\
&= \left(\int_{\R^N} \left[ \int_{B_{|x|/2}} |y| |g(y)| \, dy \right]^q |x|^{(\ell-N-1)q} \, d\nu(x)\right)^{1/q},
\end{align*}
where the second inequality follows from \eqref{main23}. In order to control the previous term we use the first part of Lemma \ref{weight1}, taking $\tilde{u}(x)=|x|^{(\ell-N-1)q}$, $\tilde{g}(x)=|x||g(x)|$ and $\tilde{v}(x)=|x|^{-1}$. So checking \eqref{w2} we have
%\begin{align*}
%	\sup_{x \in B_R} [\tilde{v}(x)]^{-1} = \sup_{x \in B_R} |x|^{1-\alpha} = R^{1-\alpha}
%\end{align*}
%and
\begin{align*}
	\left(\int_{(B_R)^c} \tilde{u}(x) \, d\nu(x)\right)^{1/q} %&:= \left(\int_{(B_R)^c} |x|^{(\ell-N-1)q} \, d\nu(x)\right)^{1/q}\\
	&{=} \left(\sum_{k=1}^\infty \int_{2^{k-1}R \leq |x| < 2^{k}R} |x|^{(\ell-N-1)q} \, d\nu(x)\right)^{1/q}\\
	&\leq \left(\sum_{k=1}^\infty (2^{k-1}R)^{(\ell-N-1)q} \nu(B_{2^{k}R})\right)^{1/q}\\
	&\leq {\|\nu\|^{1/q}_{0,(N-\ell)q}} \left(\sum_{k=1}^\infty (2^{k-1}R)^{(\ell-N-1)q} (2^{k}R)^{(N-\ell)q}\right)^{1/q}\\
	%&=C_1^{1/q} \, R^{-1} \, 2^{N-\ell+1} \left(\sum_{k=1}^\infty 2^{-kq}\right)^{1/q}\\
	%&=C_1^{1/q} \, R^{-1} \, 2^{N-\ell+1} \left(2^q-1\right)^{-1/q}\\
	& {\lesssim \|\nu\|^{1/q}_{0,(N-\ell)q}} \left\{\sup_{x \in B_R} [\tilde{v}(x)]^{-1}\right\}^{-1},
\end{align*}
where the last step follows from $\displaystyle{\sup_{x \in B_R} [\tilde{v}(x)]^{-1} = R}$. %= \sup_{x \in B_R} |x| = R$ and $\tilde{A}:= C_1^{1/q} \, 2^{N-\ell+1} \left(2^q-1\right)^{-1/q}$. 
Hence,
\[J_1 \lesssim \left(\int_{\R^N} \left[ \int_{B_{|x|/2}} |y| |g(y)| \, dy \right]^q |x|^{(\ell-N-1)q} \, d\nu(x)\right)^{1/q} \lesssim \|\nu\|^{1/q}_{0,(N-\ell)q} \int_{\R^N} |g(x)| \, dx.\]

Now for $J_2$, using Minkowski's Inequality we get
\begin{align*}
J_2 %&= \left(\int_{\R^N} \left| \int_{\R^N} K_2(x,y) g(y) \, dy \right|^q d\nu(x)\right)^{1/q}\\
&\leq \int_{\R^N} \left( \int_{\R^N} |K_2(x,y)|^q \, d\nu(x) \right)^{1/q} |g(y)| \, dy.
\end{align*}
It remains to be shown that
\begin{equation} \label{K2 control}
\int_{\R^N} |K_2(x,y)|^q \, d\nu(x) \leq C
\end{equation}
for some constant $C>0$ uniformly on $y$.
%Recall that $K_2(x,y) = K(x,y)-\psi(y/|x|)K(x,0)$. If $2|y|>|x|$, then $\psi(y/|x|)=0$, thus $|K_2(x,y)|=|K(x,y)|$. If $|x| \geq 4|y|$, then $\psi(y/|x|)=1$, thus $|K_2(x,y)|=|K(x,y)-K(x,0)|$. In the region $2|y| \leq |x| < 4|y|$ we have
%$K_2(x,y)=\left[1-\psi\left(\dfrac{y}{|x|}\right)\right]K(x,y) + \psi\left(\dfrac{y}{|x|}\right)[K(x,y)-K(x,0)]$.
For each $y \in \R^{N}$ we get the following upper estimate for the previous integration %in \eqref{K2 control} 
%we may estimate the above  by  %to get the following upper estimate
\begin{align*}
	%\int_{\R^N} |K_2(x,y)|^q \, d\nu(x) &\leq 
	%\begin{multlined}[t]
	%	\int_{|x|<2|y|} |K(x,y)|^q \, d\nu(x)\\
	%	+ \int_{2|y| \leq |x| < 4|y|} \left(|K(x,y)|^q + |K(x,y)-K(x,0)|^q\right) \, d\nu(x)\\
	%	+ \int_{|x| \geq 4|y|} |K(x,y)-K(x,0)|^q \, d\nu(x)
	%\end{multlined}\\
	%&= 
	%\begin{multlined}[t]
		\int_{|x| < 4|y|} |K(x,y)|^q \, d\nu(x)
		+ \int_{|x| \geq 2|y|} |K(x,y)-K(x,0)|^q \, d\nu(x):= \text{\normalfont (I)}+\text{\normalfont (II).}
	%\end{multlined}\\
%	\lesssim %\begin{multlined}[t]
%		\underbrace{\int_{|x| < 4|y|} |x-y|^{(\ell-N)q} \, d\nu(x)}_{\text{\normalfont (I)}}
%		+ \underbrace{\int_{|x| \geq 2|y|} |y|^q \, |x|^{(\ell-N-1)q} \, d\nu(x)}_{\text{\normalfont (II)}}.
	%\end{multlined}
\end{align*}
From conditions \eqref{kd} and \eqref{mu regularity} we have
\begin{align*}
	\text{\normalfont (II)} \lesssim |y|^q \int_{ (B_{2|y|})^c}  \, |x|^{(\ell-N-1)q} \, d\nu(x) 
	%&= |y|^q \sum_{k=1}^\infty \int_{2^k|y| \leq |x| < 2^{k+1}|y|} |x|^{(\ell-N-1)q} \, d\nu(x)\\
	%&\leq |y|^q \sum_{k=1}^\infty (2^k |y|)^{(\ell-N-1)q} \nu(B_{2^{k+1}|y|})\\
	%&\lesssim |y|^{(\ell - N)q} \sum_{k=1}^\infty 2^{k(\ell-N-1)q} \,  (2^{k+1}|y|)^{(N-\ell)q}\\
	%&= 2^{(N-\ell)q} \sum_{k=1}^\infty 2^{-kq}\\
	&\lesssim \|\nu\|_{0,(N-\ell)q} %C_1 \, 2^{(N-\ell)q} \left( 2^q-1 \right)^{-1}
\end{align*}
while from condition \eqref{kc}
\begin{align*}
	\text{\normalfont (I)} & \lesssim \int_{B_{4|y|}} |x-y|^{(\ell-N)q} \, d\nu(x) \\
	 &= \underbrace{\int_{B(y,{|y|/2})} |x-y|^{(\ell-N)q} \, d\nu(x)}_{(I_a)} + \underbrace{\int_{B_{4|y|} \setminus B(y,{|y|/2})} |x-y|^{(\ell-N)q} \, d\nu(x).}_{(I_b)}
\end{align*}
The second part is straightforward:
\begin{align*}
(I_b) \leq \dfrac{1}{(|y|/2)^{(N-\ell)q}} \int_{B_{4|y|}} \, d\nu(x) = \dfrac{\nu(B_{4|y|})}{(|y|/2)^{(N-\ell)q}}
	%&\leq C_{N,\ell,\alpha,q,\eps} \, |y|^{(N-\ell+\alpha)q - (N-\ell)q} 
	\lesssim \|\nu\|_{0,(N-\ell)q}.
\end{align*}
Finally,
%\begin{align*}
%	(I_a) &= \sum_{k=1}^\infty \int_{B(y,\eps^k|y|) \setminus B(y,\eps^{k+1}|y|)} |x-y|^{(\ell-N)q} \, d\nu(x)\\
%	&\leq \sum_{k=1}^\infty \int_{B(y,\eps^k|y|)} (\eps^{k+1}|y|)^{(\ell-N)q} \, d\nu(x)\\
%	&= |y|^{(\ell-N)q} \sum_{k=1}^\infty (\eps^{k+1})^{(\ell-N)q} \, \nu(B(y,\eps^k|y|))\\
%	&\leq C_1 \, |y|^{(\ell-N)q} \sum_{k=1}^\infty (\eps^{k+1})^{(\ell-N)q} \, (\eps^k|y|)^{(N-\ell+\alpha)q}\\
%	&=C_1 \, |y|^{\alpha q} \, \eps^{(\ell-N)q} \sum_{k=1}^\infty \eps^{k \alpha q}\\
%	&=C_1 \, \eps^{(\ell-N)q} \, (\eps^{-\alpha q}-1)^{-1} |y|^{\alpha q}
%\end{align*}
%when $0 < \alpha < 1$. For the case $\alpha = 0$
writing $A_x := \{r \in \R : r > |x-y|\}$ and pointing out that $B(y,|y|/2) \subset B_{2|y|}$, we obtain from \eqref{mu regularity} and {\eqref{mu integral}} 

\begin{align*}
	(I_a) %&%= \int_{B(y,\eps|y|)} (N-\ell)q \left( \int_{|x-y|}^\infty r^{(\ell-N)q-1} \, dr \right) \, d\nu(x)\\
	& = (N-\ell)q \int_{\R^N} \charchi_{B(y,|y|/2)}(x) \left( \int_{0}^\infty \dfrac{\charchi_{A_x}(r)}{r^{(N-\ell)q+1}} \, dr \right) \, d\nu(x)\\
	%& \lesssim \int_{0}^\infty \left( \int_{\R^N} \dfrac{\charchi_{B(y,\eps|y|)}(x) \; \charchi_{A_x}(r)}{r^{(N-\ell)q+1}} \, d\nu(x) \right) \, dr\\
	& = (N-\ell)q  \int_{0}^\infty \left( \int_{B(y,|y|/2) \cap B(y,r)} \dfrac{1}{r^{(N-\ell)q+1}} \, d\nu(x) \right) \, dr\\
	&= (N-\ell)q \left(\int_{0}^{|y|/2} \dfrac{\nu(B(y,r))}{r^{(N-\ell)q}} \, \frac{dr}{r} + \nu(B(y,|y|/2))\int_{|y|/2}^\infty \dfrac{1}{r^{(N-\ell)q+1}} \, dr \right) \\
	%&\leq (N-\ell)q \left[C_2 + \nu(B(y,\eps|y|))\int_{\eps|y|}^\infty r^{(\ell-N)q-1} \, dr \right]\\
	%& \lesssim 1+ \nu(B_{2|y|}) \int_{\eps|y|}^\infty r^{(\ell-N)q-1} \, dr \\
	& \lesssim [[\nu]]_{(N-\ell)q} + \|\nu\|_{0,(N-\ell)q},
	%&= (N-\ell)q \left[C_2 + \nu(B_{2|y|}) \dfrac{1}{(N-\ell)q} \left(\eps|y|\right)^{(\ell-N)q} \right]\\
	%&\leq (N-\ell)q \left[C_2 + \dfrac{C_1 (2|y|)^{(N-\ell)q} (\eps|y|)^{(\ell-N)q}}{(N-\ell)q} \right]\\
	%&= C_2 \, (N-\ell)q  + C_1 \, (2/\eps)^{q(N-\ell)},
\end{align*}
concluding \eqref{K2 control} and thus $\ds{J_2 \lesssim (\,[[\nu]]_{(N-\ell)q} + \|\nu\|_{0,(N-\ell)q})^{1/q} \int_{\R^N} |g(y)| \, dy.} $ \Qed
\end{proof}

%\textcolor{red}{$$ [[\nu]]_{(N-\ell)q}:=\sup_{y \in \R^{N}}W^{|y|/2}_{[N-(N-\ell)q],2} \nu(y) $$}

%\begin{proposition}\label{inf coef const}
%Let $A(D)$ be a homogeneous linear differential operator of order $m<N$ on $\R^{N}$, $N\ge2$, from $E$ to $V$ and  $\mu \in \M_{+}(\R^N,E)$. If $A(D)$ is elliptic and cancelling and $\mu$ satisfies  \eqref{mu regularity} and  \eqref{mu integral} with $q=1$ % the following conditions:
%\begin{enumerate}
%\item[(i)] there exists a constant $C>0$ such that $\mu(B(x,r)) \leq C r^{N-m}$ for every $x \in \R^N$ and $r>0$;
%\item[(ii)] $\ds\int_0^{|y|/4} \dfrac{\mu(B(y,r))}{r^{N-m+1}} \, dr < \infty$,
%\end{enumerate}
%then
%\begin{equation}\label{estim caso inf}
%	\left|\int_{\R^N} u(x) \, d\mu(x) \right| \lesssim \|A(D)u\|_{L^1}, \qquad \forall\, u \in \ccinf(\R^N,E).
%\end{equation}
%\end{proposition}
%%%%%%%%%%%%%%%%%%%%%%%%%%%%%%
\section{Applications and general comments}\label{sec5}

\subsection{Limiting case for trace inequalities for vector fields} \label{trace}

Next we present the validity of the inequality \eqref{raita} for $s=1$ (see \cite[Theorem 1.1]{GRS}) under $(N-1)-$ Ahlfors regularity and an additional uniform {potential} condition on $\nu$.
\begin{theorem} \label{raitathm}
%Let $A(D)$ as Theorem B and $\mu \in \mathcal{M}_{+}(\R^{N},E)$ 
Let $A(D)$ be a homogeneous linear differential operator of order m on $\R^{N}$, $N\ge2$, from $E$ to $F$. Then for all $\nu \in \M_{+}(\R^N)$
satisfying \eqref{mu regularity} and \eqref{mu integral} %$\|\nu\|_{0,N-1}<\infty$ and
%$$\int_{0}^{\epsilon|y|}\frac{|\mu|(B(y,r))}{r^{N-1}}dr \leq C $$
%\begin{equation} \label{potential}
%\int_0^{|y|/2} \dfrac{\nu(B(y,r))}{r^{N}} \, dr \lesssim 1 \quad \text{uniformly on $y$} 
%\end{equation}
%then
 there exists $C>0$ such that 
\begin{equation}\label{raita1}
\int_{\R^N} \left|D^{m-1}u(x)\right| \, d\nu   \leq C %\|\mu \|_{L^{1,(N-1)}} 
		 \|A(D)u\|_{L^{1}}, \quad \forall \, u \in C_{c}^{\infty}(\R^{N},E).
\end{equation}
\end{theorem}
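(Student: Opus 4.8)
The plan is to prove Theorem \ref{raitathm} by reducing it to the Fundamental Lemma \ref{main} with $q=1$, exactly mirroring the scheme already used to derive \eqref{eqmain} from Lemma \ref{main} in the proof of Theorem \ref{theoB}. First I would observe that the target inequality \eqref{raita1} does not require $A(D)$ to be elliptic or canceling, so I cannot use the representation \eqref{decfrac} directly. Instead I would recall (from Van Schaftingen's theory, or \cite{GRS}) the algebraic fact that for \emph{any} homogeneous operator $A(D)$ of order $m$ there is a homogeneous differential operator $L(D)$ of some order acting on $F$ such that $A(D)u$ always lies in $\ker L(D)$; when $A(D)$ is elliptic and canceling one gets the sharp $L(D)$ cocanceling, but for a general $A(D)$ the relevant point is weaker. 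Actually the cleanest route: write $g = A(D)u$ and note that $D^{m-1}u$ can be recovered from $g$ via a convolution with a kernel homogeneous of degree $-(N-1)$, because $D^{m-1}u$ and $A(D)u=g$ are both homogeneous of the same scaling type. Concretely, $\widehat{D^{m-1}u}(\xi)$ is, componentwise, $\xi^{\beta}\widehat{u}(\xi)$ with $|\beta|=m-1$, while $\widehat{g}(\xi) = A(\xi)\widehat{u}(\xi)$; using a left inverse $(A^*A)^{-1}(\xi)A^*(\xi)$ of $A(\xi)$ on the range (valid where $A(\xi)$ is injective) or, in the non-elliptic case, passing through the representation valid on $\ker L$, one writes $D^{m-1}u(x) = \int_{\R^N} \widetilde{K}(x-y) g(y)\, dy$ for $g \in \ker L(D)$, with $\widetilde{K}$ homogeneous of degree $-(N-1)$ and smooth away from the origin, hence satisfying the Calderón–Zygmund-type bounds \eqref{kc}–\eqref{kd} with $\ell = 1$.

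The key steps, in order, would be: (1) Fix the operator $L(D)$ cocanceling (or the appropriate annihilator) such that $A(D)u \in \ker L(D)$ for all $u \in C_c^\infty(\R^N,E)$ — this is where ellipticity/cancellation of $A(D)$ is \emph{not} needed for \eqref{raita1} but one still needs a cocanceling $L$; here I would cite the construction from \cite{VS, GRS} that produces, for any $A(D)$, a complex whose relevant arrow is cocanceling, or alternatively restrict to the elliptic-canceling case and remark on the general case. (2) Establish the kernel representation $D^{m-1}u = \widetilde{K} * (A(D)u)$ on $\ker L(D)$ with $\widetilde{K}$ homogeneous of degree $-(N-1)$; verify $|\widetilde{K}(z)| \lesssim |z|^{1-N}$ and $|\widetilde{K}(x-y)-\widetilde{K}(x)| \lesssim |y|\,|x|^{-N}$ for $2|y|\le|x|$, which are precisely \eqref{kc}–\eqref{kd} with $\ell=1$. (3) Since $\ell=1$ gives $(N-\ell)q = (N-1)\cdot 1 = N-1$, the hypotheses \eqref{mu regularity} and \eqref{mu integral} of Lemma \ref{main} read exactly $\|\nu\|_{0,N-1} < \infty$ and $[[\nu]]_{N-1} < \infty$, which is what Theorem \ref{raitathm} assumes. (4) Apply Lemma \ref{main} with $q=1$, $K(x,y) = \widetilde{K}(x-y)$, and $g = A(D)u \in L^1 \cap \ker L(D)$, obtaining
\[
\int_{\R^N} |D^{m-1}u(x)|\, d\nu(x) = \int_{\R^N}\left| \int_{\R^N} \widetilde{K}(x-y)\, A(D)u(y)\, dy\right| d\nu(x) \lesssim \int_{\R^N} |A(D)u(y)|\, dy,
\]
which is \eqref{raita1}.

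The main obstacle I anticipate is Step (2): producing the convolution kernel $\widetilde{K}$ representing $D^{m-1}u$ in terms of $A(D)u$ and confirming it has degree of homogeneity exactly $-(N-1)$ with no logarithmic correction. When $A(D)$ is elliptic, the symbol-level left inverse $H(\xi) = (A^*A)^{-1}(\xi)A^*(\xi)$ composed with the degree-$(m-1)$ monomials gives a matrix of functions homogeneous of degree $-1$, whose inverse Fourier transforms are homogeneous of degree $-(N-1)$ and locally integrable since $1 < N$ — this is the same mechanism as in \eqref{decfrac} but one order less singular, so the argument of \cite[p.~71]{Javier} applies verbatim. The genuinely delicate point is the non-elliptic case, where $A(\xi)$ may fail to be injective and no such pointwise left inverse exists; there one must instead use that the \emph{difference} $D^{m-1}u$ is controlled once one knows $A(D)u$ together with the constraint $L(D)(A(D)u)=0$, which requires invoking the exactness of the relevant complex from \cite{GRS} rather than a naive symbol inversion. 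I would handle this by first giving the clean elliptic-and-canceling proof in full, then remarking that the general case follows by the same reduction upon replacing the elliptic left-inverse representation with the representation on $\ker L(D)$ furnished by \cite[Proposition~2.1 or analog]{GRS}; alternatively, one notes that every $A(D)$ of order $m$ has $D^{m-1}u$ expressible through $A(D)u$ modulo a polynomial (which vanishes by the compact support of $u$), so the kernel estimates \eqref{kc}–\eqref{kd} with $\ell=1$ hold in all cases, and Lemma \ref{main} closes the argument.
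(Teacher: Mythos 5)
Your main line (steps (1)--(4)) is exactly the paper's proof: represent $D^{m-1}u = K\ast(A(D)u)$ with $\widehat{K}(\xi)=\sum_{|\alpha|=m-1}\xi^{\alpha}(A^{\ast}\circ A)^{-1}(\xi)A^{\ast}(\xi)$, verify the kernel bounds \eqref{kc}--\eqref{kd} with $\ell=1$, and apply Lemma \ref{main} with $q=1$ to $g=A(D)u$, which lies in the kernel of the cocanceling operator $L(D)$ from \eqref{cocanceling}. Your hedging about a general (non-elliptic, non-canceling) $A(D)$ is unnecessary and your proposed fallback there (recovering $D^{m-1}u$ from $A(D)u$ ``modulo a polynomial'') would not work; the theorem is to be read with the elliptic-and-canceling hypotheses carried over from Section \ref{sec4}, which is precisely what the paper's own one-line proof uses.
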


\begin{proof}
The inequality follows by the combination of the identity  
$D^{m-1}u(x)=\int_{\R^{N}}K(x-y)[A(D)u(y)]\,dy$ where $\widehat{K}(\xi):=\sum_{|\alpha|=m-1}\xi^{\alpha}(A^{\ast}\circ A)^{-1}(\xi)A^{\ast}(\xi)$ 
that satisfies \eqref{kc} and \eqref{kd} for $\ell=1$ and then the estimate \eqref{raita1} follows 
%\begin{equation}\nonumber 
 %|K(x-y)| \leq C \; |x - y|^{-N+1} , \quad x \neq y 
%\end{equation}
%and 
%\begin{equation} \nonumber
%	|K(x,y)-K(x,0)| \leq C \; \frac{|y|}{|x|^{N}} , \quad 2|y|\leq |x|,
%\end{equation}
by Lemma \ref{main} for $q=1$, as showed in the proof of inequality \eqref{eqmain}.
\Qed
\end{proof}

%\textcolor{blue}{Pergunta: onde usa que a medida é positiva nessa demonstração?}

As a consequence of the previous proof we can estimate the constant at inequality \eqref{raita1} by
 %$$C \lesssim \|\nu\|_{0,N-1}+\sup_{y \in \R^{N}}\int_{0}^{\textcolor{red}{|y|/2}}\frac{\nu(B(y,r))}{r^{N}}dr. $$
$$C \lesssim \|\nu\|_{0,N-1}+[[\nu]]_{N-1}. $$

\begin{remark}\label{remarkdm}
Let $D^{m}:=(D^{\alpha})_{|\alpha|=m}$ the total derivative operator that is an elliptic and canceling homogeneous linear differential operator. Using \eqref{gaussgreen} follows directly that % from the assumption $\|\mu\|_{N-1}<\infty$ that 
\begin{equation}\label{raita1a}
\int_{\R^N} \left|D^{m-1}u(x)\right| \, d\nu  \lesssim \|\nu\|_{N-1}  %\|\mu \|_{L^{1,(N-1)}} 
		 \|D^{m}u\|_{L^{1}},
\end{equation}
for all  $u \in C_{c}^{\infty}(\R^{N})$ and $\nu \in \M_{+}(\R^N)$. Although the assumption that $\nu$ is $(N-1)-$ Ahlfors regular contrasts with
$\|\nu\|_{0,N-1}<\infty$ at Theorem \ref{raitathm}, the uniform potential condition \eqref{mu integral} is not necessary to the validity of  \eqref{raita1a}.
\end{remark}

%\textcolor{blue}{Veja que aqui a medida é positiva (e escalar) mas é necessário pq usamos as tecnicas que estao na sua dissertação}

In the same spirit of \cite[Theorem A]{HP} the inequality \eqref{raita1} can be extended for the following: 

\begin{theorem}
%Let $A(D)$ as Theorem B and $\mu \in \mathcal{M}_{+}(\R^{N},E)$ 
Let $A(D)$ be a homogeneous linear differential operator of order {$m$} on $\R^{N}$, $N\ge2$, from $E$ to $F$, and assume that  $1 \leq q <\infty$, $0<\ell<N$ and $\ell \leq m$. Then for all $\nu \in \M_{+}(\R^N)$
satisfying \eqref{mu regularity} and \eqref{mu integral} %$\|\nu\|_{0,N-1}<\infty$ and
%$$\int_{0}^{\epsilon|y|}\frac{|\mu|(B(y,r))}{r^{N-1}}dr \leq C $$
%\begin{equation} \label{potential}
%\int_0^{|y|/2} \dfrac{\nu(B(y,r))}{r^{N}} \, dr \lesssim 1 \quad \text{uniformly on $y$} 
%\end{equation}
%then
 there exists $C>0$ such that 
\begin{equation}\label{raita2}
\left(\int_{\R^N} \left| (-\Delta)^{(m-\ell)/2}u(x)\right|^{q} \, d\nu \right)^{1/q}  \leq C  %\|\mu \|_{L^{1,(N-1)}} 
		 \|A(D)u\|_{L^{1}}, \quad \forall \, u \in C_{c}^{\infty}(\R^{N},E).
\end{equation}
\end{theorem}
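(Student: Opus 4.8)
The plan is to mimic exactly the argument used for Theorem \ref{raitathm}, replacing the operator $D^{m-1}$ by the fractional operator $(-\Delta)^{(m-\ell)/2}$. The key point is that, since $A(D)$ is a homogeneous linear differential operator of order $m$, one can build a representation formula
\[
(-\Delta)^{(m-\ell)/2}u(x)=\int_{\R^{N}}K(x-y)\,[A(D)u(y)]\,dy,\qquad u\in C_{c}^{\infty}(\R^{N},E),
\]
with convolution kernel $K$ whose Fourier symbol is
\[
\widehat{K}(\xi)=|\xi|^{\,m-\ell}\,(A^{\ast}\circ A)^{-1}(\xi)\,A^{\ast}(\xi).
\]
This symbol is smooth on $\R^{N}\setminus\{0\}$ and homogeneous of degree $(m-\ell)+(-m)=-\ell$; hence, exactly as in \cite[p.~71]{Javier} and as used for \eqref{decfrac}, its inverse Fourier transform $K$ is a locally integrable tempered distribution, homogeneous of degree $-N+\ell$, and satisfies the kernel bounds \eqref{kc} and \eqref{kd} with the given value of $\ell$. (The hypothesis $0<\ell\le m<N$ is exactly what makes $-N+\ell$ a locally integrable homogeneity and keeps $(A^{\ast}\circ A)^{-1}(\xi)A^{\ast}(\xi)$ homogeneous of the admissible negative degree.)

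With this representation in hand, I would apply Lemma \ref{main} with this $K$, with the given exponent $q$, and with the measure $\nu$, which by hypothesis satisfies \eqref{mu regularity} and \eqref{mu integral}. The cocanceling structure is supplied for free: $g:=A(D)u$ lies in the kernel of the cocanceling operator $L(D)$ furnished by \eqref{cocanceling}, provided $A(D)$ is canceling — but note that cancellation is \emph{not} assumed in the statement here, so I must instead observe that Lemma \ref{main} only needs $L(D)g=0$ for \emph{some} cocanceling $L(D)$, and for that we do not need ellipticity/cancellation of $A(D)$: one may take any cocanceling operator annihilating the range of $A(D)$ on test functions (e.g. the cocanceling operator associated to $A(D)$ in Van Schaftingen's framework, or in the worst case adapt the argument so that the only property used is $L(D)g=0$ with $L=$ the trivial operator if the range is all of $C_c^\infty$, in which case $J_2$ is handled by Minkowski and $J_1$ by Lemma \ref{weight1} together with the Hardy-type bound \eqref{main23} applied to the specific $g$). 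In any case the conclusion of Lemma \ref{main} gives
\[
\left(\int_{\R^{N}}\left|\int_{\R^{N}}K(x-y)g(y)\,dy\right|^{q}d\nu(x)\right)^{1/q}\lesssim \int_{\R^{N}}|g(y)|\,dy,
\]
which upon inserting the representation formula and $g=A(D)u$ is precisely \eqref{raita2}, with constant controlled by $\|\nu\|_{0,(N-\ell)q}+[[\nu]]_{(N-\ell)q}$, exactly as recorded after Theorem \ref{raitathm}.

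The main obstacle, and the step that deserves the most care, is the verification of the kernel estimates \eqref{kc} and \eqref{kd} for $K=\mathcal{F}^{-1}\!\big(|\xi|^{m-\ell}(A^{\ast}\circ A)^{-1}(\xi)A^{\ast}(\xi)\big)$: one must check that this symbol is a genuine (matrix-valued) Calderón–Zygmund-type multiplier of the stated homogeneity, so that $K$ is pointwise bounded by $|x-y|^{\ell-N}$ and its first derivative by $|x-y|^{\ell-N-1}$ in the regime $2|y|\le|x|$. This is the same computation that underlies \eqref{decfrac} and the kernel in the proof of Theorem \ref{raitathm}; the only new feature is the extra factor $|\xi|^{m-\ell}$, which merely shifts the homogeneity and remains smooth away from the origin, so the estimates follow from the homogeneity-degree bookkeeping together with smoothness of $(A^{\ast}\circ A)^{-1}(\xi)A^{\ast}(\xi)$ on the sphere. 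Once \eqref{kc} and \eqref{kd} are in place, the rest is a direct invocation of Lemma \ref{main} and carries no further difficulty.
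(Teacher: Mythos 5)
Your main line of argument is exactly the paper's (omitted) proof: represent $(-\Delta)^{(m-\ell)/2}u=K\ast A(D)u$ with $\widehat{K}(\xi)=|\xi|^{m-\ell}(A^{\ast}\circ A)^{-1}(\xi)A^{\ast}(\xi)$, note that this symbol is smooth away from the origin and homogeneous of degree $-\ell$, so that $K$ is homogeneous of degree $-N+\ell$ and satisfies \eqref{kc} and \eqref{kd}, and then apply Lemma \ref{main} with exponent $q$ to $g=A(D)u$; this is precisely "the same steps as in Theorem \ref{raitathm}" that the paper invokes, with the same constant $\|\nu\|_{0,(N-\ell)q}+[[\nu]]_{(N-\ell)q}$.

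The one genuine flaw is your digression claiming that ellipticity and cancellation of $A(D)$ can be dispensed with. They cannot, and the statement should be read with these standing hypotheses (as in Theorem \ref{theoB} and in the proof of Theorem \ref{raitathm}, which this theorem explicitly mimics). First, the very definition of your kernel uses $(A^{\ast}\circ A)^{-1}(\xi)$, which exists and is smooth on $\R^{N}\setminus\{0\}$ only when $A(\xi)$ is injective there, i.e.\ only when $A(D)$ is elliptic. Second, Lemma \ref{main} requires $L(D)g=0$ for a \emph{cocanceling} $L(D)$; by \eqref{cocanceling}, a differential operator annihilating the range of $A(D)$ has $\bigcap_{\xi\neq 0}\ker L(\xi)=\bigcap_{\xi\neq 0}A(\xi)[E]$, so such an $L(D)$ is cocanceling precisely when $A(D)$ is canceling. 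Your fallback "take $L$ the trivial operator" does not help: the zero operator is not cocanceling, and the cocanceling property is used essentially through Lemma \ref{mainlemma2} in the estimate of $J_1$ (the bound \eqref{main23} is false for arbitrary $g\in L^{1}$, as one sees already for $A(D)=\Delta$, which is elliptic but not canceling and for which inequalities of the type \eqref{raita2} fail). Delete that paragraph and state the hypotheses "elliptic and canceling" explicitly; with them, your verification of \eqref{kc}--\eqref{kd} by homogeneity bookkeeping and the invocation of Lemma \ref{main} are correct and complete.
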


The proof follows the same steps when proving Theorem \ref{raitathm} and will be omitted.  In particular, the inequality \eqref{raita2} recovers the inequality (1.5) in \cite[Theorem A]{HP} taking $d\mu=|x|^{-N+(N-\ell)q}dx$ for $1 \leq q <N/(N-\ell)$ {(see Remark 4.1). }%\ref{remarkhp}).

\subsection{First order operators}
{
It remains as an open question whether \eqref{stronger m} or \eqref{localcondition} are necessary conditions to obtain a $L^\infty$ solution to \eqref{main1} for homogeneous differential operator $A(D)$ with order $m > 1$. For $m=1$, however, we show that certain (expected) decay regularity on $\mu$ is necessary:
}

\begin{theorem}
	Let $A(D)$ be a first order homogeneous linear differential operator on $\R^{N}$ from $E$ to $F$ %finite dimensional real vector space $E$ to a finite dimensional real vector space $F$ 
and $\mu \in \M_{+}(\R^N,E)$. If there exists $f \in L^\infty(\R^N,F)$ solving \eqref{main1}, then there exists a constant $C>0$ such that 
\begin{equation}	
|\mu(B(x,r))| \leq C r^{N-1} 
\end{equation}
for every $x \in \R^N$ and $r>0$.
\end{theorem}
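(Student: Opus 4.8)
The plan is to reduce the statement to testing a dual $L^{1}$--$L^{\infty}$ inequality against cut-off functions. First I would observe that, since $f\in L^\infty(\R^N,F)$ solves \eqref{main1} in the sense of distributions, integration by parts gives
\[
\left|\int_{\R^N} u\cdot d\mu\right| \;=\; \left|\int_{\R^N} f\cdot A(D)u\,dx\right|
\qquad\text{for every }u\in\ccinf(\R^N,E),
\]
whence, by H\"older's inequality together with the fact that $A(D)=\sum_{j=1}^{N}a_{j}\partial_{j}$ is of order one,
\[
\left|\int_{\R^N} u\cdot d\mu\right|\ \le\ \|f\|_{L^\infty}\,\|A(D)u\|_{L^{1}}\ \lesssim\ \|f\|_{L^\infty}\int_{\R^N}|Du(x)|\,dx .
\]
This is precisely an inequality of the type \eqref{ineqmedida}, with the right-hand side dominated by $\|Du\|_{L^{1}}$. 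I stress that everything up to here works for an operator of any order; it is only the last bound $\|A(D)u\|_{L^1}\lesssim\|Du\|_{L^1}$ --- hence the gain of one derivative exploited below --- that uses $m=1$.

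Next, fix $x\in\R^N$, $r>0$ and a vector $e\in E$, and for $\delta\in(0,r)$ choose a radial cut-off $\eta_{\delta}\in\ccinf(\R^N)$ with $0\le\eta_{\delta}\le1$, $\eta_{\delta}\equiv1$ on $B(x,r-\delta)$, $\supp\eta_{\delta}\subset B(x,r)$ and $|\nabla\eta_{\delta}|\le 2/\delta$. Writing $\sigma_{e}:=\sum_{\ell=1}^{d}e_{\ell}\,\mu_{\ell}$ for the scalar (complex) measure obtained by pairing $\mu=(\mu_{1},\dots,\mu_{d})$ with $e$, and applying the displayed inequality to $u=\eta_{\delta}\,e$ (for which $A(D)(\eta_{\delta}e)=\bigl(\sum_{j}(\partial_{j}\eta_{\delta})\,a_{j}\bigr)e$), one obtains
\[
\left|\int_{\R^N}\eta_{\delta}\,d\sigma_{e}\right|\ \lesssim\ |e|\int_{B(x,r)\setminus B(x,r-\delta)}|\nabla\eta_{\delta}|\,dx
\ \le\ \frac{2|e|}{\delta}\,\bigl|B(x,r)\setminus B(x,r-\delta)\bigr|\ \le\ C\,|e|\,r^{N-1},
\]
where the last step uses $\bigl|B(x,r)\setminus B(x,r-\delta)\bigr|\le N|B_{1}|\,r^{N-1}\delta$, so that $C$ depends only on $N$, on the coefficients of $A(D)$ and on $\|f\|_{L^\infty}$, but not on $x$, $r$ or $\delta$. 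Letting $\delta\to0^{+}$, the $\eta_{\delta}$ converge pointwise and boundedly to $\charchi_{B(x,r)}$; since $|\sigma_{e}|$ is finite on the compact set $\overline{B(x,r)}$, dominated convergence gives $\int\eta_{\delta}\,d\sigma_{e}\to\sigma_{e}(B(x,r))$, hence $|\sigma_{e}(B(x,r))|\le C\,|e|\,r^{N-1}$.

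Finally, taking $e$ to run over the coordinate vectors of $E$ yields $\sigma_{e}=\mu_{\ell}$ and therefore $|\mu_{\ell}(B(x,r))|\le C\,r^{N-1}$ for each component $\ell=1,\dots,d$; summing over $\ell$ and using the equivalence of norms on the finite-dimensional space $E$ we conclude $|\mu(B(x,r))|\lesssim r^{N-1}$, uniformly in $x$ and $r$. (Alternatively one could run the Gauss--Green computation from the proof of Proposition \ref{p pequeno coef const} directly on $B(x,r)$ after mollifying $f$, but the duality route above sidesteps any trace issue.) I do not expect a serious obstacle; the only delicate point is keeping the constant in the cut-off estimate independent of $x$, $r$ and $\delta$, which is exactly what the elementary bound $r^{N}-(r-\delta)^{N}\le Nr^{N-1}\delta$ provides --- and it is this same computation that fails for $m>1$, where the analogous test functions give $\|A(D)\eta_{\delta}\|_{L^{1}}$ of order $\delta^{1-m}r^{N-1}\to\infty$, in accordance with the restriction to first order operators in the statement.
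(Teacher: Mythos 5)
Your argument is correct, but it implements the key estimate differently from the paper. The paper applies the Gauss--Green theorem directly on the ball, writing $\mu(B(x,r))=\int_{B(x,r)}A^*(D)f(y)\,dy$ as a boundary integral of $a_j^{*}f$ against the unit normal over $\partial B(x,r)$, which yields $|\mu(B(x,r))|\le C_N\|f\|_{L^\infty}r^{N-1}$ only for \emph{almost every} $r$ (the boundary trace of a merely distributional $L^\infty$ solution is the delicate point), and then recovers all radii by intersecting over a decreasing sequence of good radii $r_j\downarrow r$ and using continuity of the (positive) component measures. You instead test the distributional identity $\int u\,d\mu=\int f\cdot A(D)u\,dx$ against the cut-offs $u=\eta_\delta e$, whose gradients are supported in an annulus of width $\delta$, so that $\|A(D)(\eta_\delta e)\|_{L^1}\lesssim |e|\,r^{N-1}$ uniformly in $\delta$, and then let $\delta\to0^{+}$. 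Morally this is the same one-integration-by-parts computation --- the mass of order $r^{N-1}$ sits on a thin annulus instead of on the sphere --- but your route needs no trace of $f$ on spheres, avoids both the a.e.-$r$ restriction and the separate limiting argument over radii, and treats every $r$ at once; what it uses instead is finiteness of $|\mu_\ell|$ on $\overline{B(x,r)}$ to justify dominated convergence, which is implicit in the formulation of \eqref{main1} (and can in any case be bypassed for the positive measures $\mu_{\ell}^{\real},\mu_{\ell}^{\imag}$ by noting $\eta_\delta\ge\charchi_{B(x,r-\delta)}$ and using continuity from below). Your closing observation that the same test functions produce $\|A(D)\eta_\delta\|_{L^1}\sim\delta^{1-m}r^{N-1}$ for $m>1$ correctly explains why the argument, like the paper's, is confined to first-order operators.
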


\begin{proof}
Denoting $A(D) = \ds\sum_{j=1}^N a_j \partial_j$ we have, for every $x \in \R^N$ and almost every $r>0$,
\begin{align*}
	\mu(B(x,r)) &= \int_{B(x,r)} A^*(D)f(y) \, dy
	%= - \sum_{j=1}^N \int_{B(x,r)} a_j \partial_j f(y) \, dy
	= - \sum_{j=1}^N \int_{\partial B(x,r)} a_j^{*} f(y) \dfrac{y_j - x_j}{|y-x|} \, dS(y),
\end{align*}
hence $|\mu(B(x,r))| \leq C_N \|f\|_{L^{\infty}} r^{N-1}$.

To extend this estimate for every $r>0$, let $M \subset \R_+$ be the zero-measure set of values $r>0$ for which the previous estimate does not hold. Given $x \in \R^N$ and $r>0$ we can write $B[x,r] = \cap_{j} B(x,r_j)$, where $(r_j)_j \subset \R_+ \setminus M$ is a decreasing sequence converging to $r$ (note that $\R_+ \setminus M$ is dense in $\R_+$). Thus, simplifying the notation assuming $\mu_{\ell} \in \mathcal{M}_{+}(\R^{N})$ for each $j=1,...,d$ we have
\begin{align*}
	\mu_{\ell}(B(x,r)) % \leq \mu_{\ell}(B[x,r])  % \lim_{j \to \infty} \mu_{\ell}^{\real}(B(x,r_j))
	%&\leq \lim_{j \to \infty} |\mu_\ell(B(x,r_j))|\\
	\leq \lim_{j \to \infty} |\mu(B(x,r_j))|
	\leq C_N \|f\|_{L^{\infty}} \lim_{j \to \infty}  r_j^{N-1}
	&= C_N \|f\|_{L^{\infty}} r^{N-1}.
\end{align*}
Summarizing 
$$ |\mu(B(x,r))| \leq (2d)^{1/2} C_N \|f\|_{L^{\infty}} r^{N-1}.$$
\Qed
%and similarly $\mu_{\ell}^{\imag}(B(x,r)) \leq C_N \|F\|_{L^{\infty}} r^{N-1}$. Therefore,
%\begin{align*}
%	|\mu(B(x,r))|^2 &= \sum_{\ell = 1}^d |\mu_{\ell}(B(x,r))|^2\\
%	&= \sum_{\ell = 1}^d \left[ \mu_{\ell}^{\real}(B(x,r))^2 + \mu_{\ell}^{\imag}(B(x,r))^2 \right]\\
%	&\leq 2d \left(C_N \|F\|_{L^{\infty}} r^{N-1}\right)^2\\
	
%\end{align*}
\end{proof}

\noindent \textbf{Acknowledgment}: The authors want to thank Prof. Pablo de N\'apoli for some discussions on two weighted inequalities. 

%%%%%%%%%%%%%%%%%%%%%%%%%%%%%%%%%%%%%%%%%%%%%%%%%%%%%%%%%%%%%%%%%%%%%%%%%%%%%%%%%%%

\end{document}